\theoremstyle{plain}
\numberwithin{equation}{section}
\newtheorem{thm}{Theorem}[section]
\newtheorem{theorem}[thm]{Theorem}
\newtheorem{lemma}[thm]{Lemma}
\newtheorem{assumption}[thm]{Assumption}
\newtheorem{corollary}[thm]{Corollary}
\newtheorem{proposition}[thm]{Proposition}
\newtheorem{remark}[thm]{Remark}
\newtheorem{problem}[thm]{Problem}
\def\ep{\epsilon}
\def\ga{\gamma}
\def\la{\lambda}
\def\om{\omega}
\def\Om{\Omega}
\def\pa{\partial}
\def\N{\mathbb{N}}
\def\R{\mathbb{R}}
\def\fa{\forall}
\def\grad{\nabla}
\def\lap{\bigtriangleup}
\def\Alg{\mathcal{A}}
\def\Lin{\mathcal{L}}
\def\des{\text{des}}
\def\liminf{\text{liminf}}
\def\limsup{\text{limsup}}
\def\loc{\text{loc}}
\newcommand\beq{\begin{equation}}
\newcommand{\bburl}[1]{\textcolor{blue}{\url{#1}}}
\newcommand\eeq{\end{equation}}
\newcommand\bea{\begin{eqnarray}}
\newcommand\eea{\end{xq}}
\newcommand\bi{\begin{itemize}}
\newcommand\ei{\end{itemize}}
\newcommand\ben{\begin{enumerate}}
\newcommand\een{\end{enumerate}}
\begin{document}
\begin{center}
\uppercase{\bf \boldmath Existence of Solutions for Fractional Optimal Control Problems with Superlinear-subcritical Controls}
\vskip 20pt
%\author{Joshua M. Siktar}
{\bf Joshua M. Siktar}\\
{\textit{Department of Mathematics, Texas A\&M University, TX, USA}}\\
{\tt Email: jmsiktar@tamu.edu,  Phone: 412-860-3371}\\ 
%{\tt https://sites.google.com/view/joshua-m-siktar/bio}\\

\subjclass[2010]{49J21, %Existence theories for optimal control problems involving relations other than differential equations
49J35, % Existence of solutions for minimax problems
45K05 %Integro-partial differential equations
}

\date{\today}							% Activate to display a given date or no date
\end{center}

%\maketitle
\vskip 30pt
%\begin{abstract}
\centerline{\bf Abstract}
\noindent
    This paper gives an existence result for solutions to an elliptic optimal control problem based on a general fractional kernel, where the admissible controls come from a class satisfying both a growth bound and a superlinear-subcritical condition. Each admissible control is known to produce a nontrivial corresponding state by applying the Mountain Pass Theorem to fractional equations. The main theoretical contribution is the construction of a suitable set of admissible controls on which the standard existence theory for control problems with linear and semi-linear state constraints can be adapted. Extra care is taken to explain what new difficulties arise for these types of control problems, to justify the limitations of this theory. For completeness, the corresponding local elliptic control problem is also studied.
%\end{abstract}

\vskip 10 pt
%\keywords{Optimal control, Mountain Pass Theorem, Fractional Laplacian}
{\bf Keywords: Optimal control, Mountain Pass Theorem, Fractional Laplacian}
%\tableofcontents

%%%%%%%%%%%%

\section{Introduction}\label{sec: intro}

The goal of this paper is to prove the existence of solutions to a fractional optimal control problem, where each control has multiple associated states governed by solving a semi-linear fractional equation whose associated energy is studied using minimax methods. The state equation in our setting is of the form
\begin{equation}\label{Eq: MacroFracEqn}
    -\Lin_K u(x) \ = \ g(x, u(x)),
\end{equation}
which is solved pointwise for $x$ belonging to a bounded domain $\Om \subset \R^n$, where $u$ is prescribed to vanish on $\R^n\setminus \Om$. Here $\Lin_K$ is an elliptic integro-differential operator; a special case is where $\Lin_K := -(\lap)^s$, where $-(\lap)^s$ denotes the Fractional Laplacian operator given in the Cauchy principal value sense; this operator has been thoroughly studied in references to be given later. 

The mathematical novelty of this work is in using an equation of the form \eqref{Eq: MacroFracEqn} as a constraint for an optimal control problem, which takes the general form
\begin{equation}\label{Eq: MacroOptControl}
    \begin{cases}
        \min\{I(u, g) \ | \ (u, g) \in X_{\text{ad}} \times Z_{\text{ad}}\} \\
        -\Lin_K u(x) \ = \ g(x, u(x)), \ x \in \Om
    \end{cases},
\end{equation}
where the exact form of the admissible sets $X_{\text{ad}}$ and $Z_{\text{ad}}$, as well as the cost functional $I(\cdot, \cdot)$, will be specified in due time. This framework can serve as a selection criteria among a collection of possible physical states associated with a given input control. 

Now we highlight the literature that gives context for the problem to be addressed in the current work. The solving of an [elliptic] partial differential equation (PDE) is equivalent to finding critical points of some energy minimizer. If the energy to be minimized is strictly convex, that corresponds to the equation to be solved having a unique solution. In order to understand the non-convex case, two classes of theories have been developed: Morse theory (classical texts include \cite{milnor1963morse, schwartz1969nonlinear}), and minimax theory (classical texts include \cite{ljusternik1934methodes, birkhoff1917dynamical}, and see \cite{rabinowitz1986minimax} for a more recent survey; Chapter 2 of \cite{tavares2024topics} presents a comprehensive overview of different methods for treating semi-linear elliptic PDE). The types of state equations studied in this paper have their properties proven as applications of minimax theory, in particular with a central result known as the \textbf{Mountain Pass Theorem}. This theorem acts as a sufficient condition for a non-convex energy to have multiple critical points; in this setting, one corresponds to the trivial solution of the PDE, so there remains at least one non-trivial solution. This theory was originally developed to study the multiplicity of second-order semi-linear PDEs where the main term of the differential operator was canonically a Laplacian, including applications to Hamiltonian systems and bifurcation theory. Resulting works in this direction include \cite{ambrosetti1973dual, rabinowitz1986minimax, struwe2000variational, de2009multiple, choi1993mountain, choi1993semiwave, degiovanni2022variational, clark1972variant, kajikiya2005critical}; one variant of this result will be stated precisely later in the paper. Other, more recent papers, present further sets of conditions under which an energy has two, and sometimes more, distinct critical points, in an abstract functional analytic framework; these include \cite{ricceri2000existence, ricceri2000three, ricceri2010multiplicity, ricceri2011further, ambrosetti1973dual, ambrosetti1992critical}

More recent papers have studied minimax methods for nonlocal equations, particularly those of fractional type using the same tools as used to study PDEs. Nonlocal models, including fractional ones, have emerged as a popular way to study long-range interactions, utilizing an integration around a fixed point in space. Reference texts on the analytic properties of Fractional Sobolev Spaces include \cite{demengel2012functional, leoni2023first, di2012hitchhikers}, and fractional equations were studied using minimax theory in \cite{servadei2012mountain, servadei2013variational, bisci2014mountain, bisci2014three, gu2018infinitely, bors2014application, chammem2023combined, dipierro2017fractional, bisci2016variational, maione2023variational}, among other sources. Finally, we comment that a few recent papers have studied optimal control problems using nonlocal models in the constraints, see \cite{d2019priori, mengesha2023control, han2024compactness,  munoz2022local}.%; however, with the exception of \cite{cueto2024localization}, all of the problems studied in these papers have a well-posed state constraint, i.e. for any admissible control there is exactly one state solving the constraint equation.

We now outline the contents of the remainder of the paper. Section \ref{sec: prelim} provides notation and formal problem statements, then provides necessary background material. Then, Section \ref{sec: Foundation} discusses the two new core ideas for our optimal control framework: the introduction of the admissible set of controls, and how to ensure that the limit of a sequence of admissible pairs of controls and states still solves the constraint equation. Finally, Section \ref{Sec: Existence} gives our main existence result for the nonlocal control problem, and then the analogous result for the corresponding, PDE-based local problem. %Finally, Section \ref{sec: future} discusses potential directions for future research.

%%%%%%%%%%%

\section{Preliminaries}\label{sec: prelim}

\subsection{Notation}\label{sec: note}

Throughout this paper we will assume that $\Om \subset \R^n$ is a bounded, open set with a Lipschitz domain, and we will assume that $n > 2$. We also denote $Q := (\Om \times \R^n) \cup (\R^n \times \Om)$. The nonlocal control problem is posed in terms of an abstract kernel $K$ with fractional parameter $s \in (0, 1)$ fixed. These are assumptions on $K$ that we use, and they are the ones that appear in \cite{servadei2012mountain, servadei2013variational}.
\begin{assumption}[Kernel assumptions]\label{Assump: ker}
    The kernel $K: \R^n\setminus\{0\} \rightarrow [0, \infty)$ satisfies the following properties:
    \begin{enumerate}[label=\textbf{K\arabic*}]
        \item\label{K1} If $\ga(x) := \min\{|x|^2, 1\}$, then $\ga K \in L^1(\R^n)$;
        \item\label{K2} There exists $\la > 0$ such that $K(x) \geq \la|x|^{-(n + 2s)}$ for all $x \in \R^n\setminus\{0\}$;
        \item\label{K3} $K$ is even; i.e. $K(x) = K(-x)$ for all $x \in \R^n\setminus \{0\}$
    \end{enumerate}
\end{assumption}

The prototypical kernel that satisfies Assumption \ref{Assump: ker} is the fractional kernel of order $s$, i.e. $K(x) = |x|^{-(n + 2s)}$. In this and the general case, we define our nonlocal operator $\Lin_K$ as
\begin{equation}\label{Eq: NLOperatorLK}
    \Lin_K u(x) \ := \ \frac{1}{2}\int_{\R^n}(u(x + y) + u(x - y) - 2u(x))K(y)dy
\end{equation}
for each $x \in \R^n$. Meanwhile, for the local problem we will simply use the Laplacian operator $-\lap$.

Now we may define our function spaces. The classical Fractional Sobolev Space is defined as, for $s \in (0, 1)$,
\begin{equation}\label{Eq: HsFuncSpace}
    H^s(\Om) \ := \ \{u: \R^n \rightarrow \R \ | \ u|_{\Om} \in L^2(\Om), \ \|u\|_{H^s(\Om)} \ < \ \infty\},
\end{equation}
where the norm $\|\cdot\|_{H^s(\Om)}$ is defined by
\begin{equation}\label{Eq: HSNorm}
    \|u\|_{H^s(\Om)} \ := \ \|u\|_{L^2(\Om)} + \left(\iint_{\Om \times \Om}\frac{|u(x) - u(y)|^2}{|x - y|^{n + 2s}}dxdy\right)^{\frac{1}{2}}.
\end{equation}
In the case where we want to pose a Dirichlet boundary condition, we denote that space as
\begin{equation}\label{Eq: Hs0}
\widetilde{H^s}(\R^n) \ := \ \{u \in H^s(\Om) \ | \ u|_{\R^n\setminus \Om} \ \equiv \ 0\}.
\end{equation}
It is well-known that $(H^s(\Om), \|\cdot\|_{H^s(\Om)})$ and $(\widetilde{H^s}(\R^n), \|\cdot\|_{H^s(\Om)})$ are Hilbert spaces for any $s \in (0, 1)$.

We also need nonlocal function spaces associated with the kernel $K$, which we denote as
\begin{equation}\label{Eq: XFunc}
    X(\Om) \ := \ \Big\{u: \R^n \rightarrow \R \ | \ u|_{\Om} \in L^2(\Om), \ \iint_Q K(x - y)|u(x) - u(y)|^2dxdy \ < \ \infty\Big\},
\end{equation}
where $Q := \R^{2n}\setminus((\R^n\setminus \Om) \times (\R^n\setminus \Om))$.

Solutions for our equations will be posed on the subspace of $X(\Om)$ that possesses a boundary constraint, namely the space
\begin{equation}\label{Eq: X0Func}
    \widetilde{X}(\R^n) \ := \ \{u \in X(\Om) \ |  \ u|_{\R^n\setminus \Om} \ \equiv \ 0\}.
\end{equation}
Naturally, the underlying norm is
\begin{equation}\label{Eq: XNorm}
    \|u\|_{X(\Om)} \ := \ \|u\|_{L^2(\Om)} + [u]_{X(\Om)},
\end{equation}
where $[\cdot]_{X(\Om)}$ is a semi-norm on $X(\Om)$ defined by
\begin{equation}
     [u]_{X(\Om)} \ := \ \left(\iint_Q K(x - y)|u(x) - u(y)|^2dxdy\right)^{\frac{1}{2}}
\end{equation}
We recall that $(\widetilde{X}(\R^n), \|\cdot\|_{X(\Om)}$ is a Hilbert space, see \cite[Lemma 7]{servadei2012mountain}. The definition \eqref{Eq: NLOperatorLK} was also featured there.

For the local problem, we recall the classical Sobolev Space
\begin{equation}\label{Eq: H1}
    H^1(\Om) \ := \ \{u: \R^n \rightarrow \R \ | \ u|_{\Om} \in L^2(\Om), \ \grad u \in L^2(\Om; \R^n)\}
\end{equation}
and define $\widetilde{H^1}(\R^n)$ as the extension $H^1(\Om)$ that vanishes outside of $\Om$, i.e., 
\begin{equation}\label{Eq: H10}
    \widetilde{H^1}(\R^n) \ := \ \{u \in H^1(\Om) \ | \ u \ = \ 0 \ \text{on} \ \R^n\setminus \Om\}
\end{equation}
The natural norm on $H^1(\Om)$ is
\begin{equation}\label{Eq: H1Norm}
    \|u\|_{H^1(\Om)} \ := \ \|u\|_{L^2(\Om)} + [u]_{H^1(\Om)},
\end{equation}
where we have defined
\begin{equation}\label{Eq: H1seminorm}
    [u]_{H^1(\Om)} \ = \ \left(\int_{\Om}|\grad u(x)|^2dx\right)^{\frac{1}{2}}
\end{equation}
to be the canonical semi-norm on $H^1(\Om)$.

Now we may define the admissible sets of controls for the nonlocal and local control problems.

\begin{assumption}[Nonlocal admissible set of controls]\label{assump: AssumptionsOnControls}
 Let $a_1, a_2 > 0$, $\mu > 2$, $r \in (n + 1, \infty)$ be fixed constants, and let $q \in (2, 2^*_s)$ be a fixed exponent where $2^*_s := \frac{2n}{n - 2s}$. Also let $C: (0, \infty) \rightarrow (0, \infty)$ be fixed. We define the admissible set of controls $Z_{\text{ad}}$ to be the collection of functions $g: \Om \times \R \rightarrow \R$ satisfying the following conditions:
    \begin{enumerate}[label=\textbf{A\arabic*}]
    \item \label{A1} We have the inequality
     $\|g\|_{W^{1, r}(\Om \times (-M, M))} \ \leq \ C(M)$ for all $M > 0$. 
        \item \label{A2} We have that \begin{equation}\label{Eq: gqCoercive}
            |g(x, t)| \ \leq \ a_1 + a_2|t|^{q - 1}
        \end{equation}
        for all $x \in \Om$ and $t \in \R$.
        \item\label{A3} We have the limit
        \begin{equation}\label{eq: uniformLimit}
            \lim_{t \rightarrow 0}\frac{g(x, t)}{t} \ = \ 0
        \end{equation}
        uniformly over $x \in \Om$.
        \item\label{A4} If we define 
        \begin{equation}\label{eq: Gantideriv}
            G(x, t) \ := \ \int^{t}_{0}g(x, \tau)d\tau
        \end{equation}
        then there exists $\om > 0$ so for all $t \in \R$ with $|t| \geq \om$, $x \in \Om$ we have
        \begin{equation}\label{Eq: GGG}
            0 \ < \ \mu G(x, t) \ \leq \ tg(x, t)
        \end{equation}
    \end{enumerate}
\end{assumption}

We note that Assumption \ref{A4} is known in the literature as a super-quadraticity condition, or as an Ambrosetti-Rabinowitz condition. Meanwhile, we say the controls have sub-critical growth due to Assumption \ref{A2}.

\begin{assumption}[Local admissible set of controls]\label{Assump: lAdmis}
     Let $a_0, a_1 > 0$, $\mu > 2$ be fixed constants, and let $q \in (2, 2^*_1)$ be a fixed exponent where $2^*_1 := \frac{2n}{n - 2}$
    We assume that a control $g: \Om \times \R \rightarrow \R$ belongs to the admissible set $Z^{\loc}_{\text{ad}}$ if and only if \ref{A1}, \ref{A2}, \ref{A3}, and \ref{A4} are all met, but where $q < 2^*_1$ instead of $q < 2^*_s$.
\end{assumption}

We note that according to the discussion preceding \cite[Theorem 2.15]{rabinowitz1986minimax}, we can also guarantee existence of nontrivial solutions in the local case when $n = 1$ or $n = 2$ with slightly more relaxed growth assumptions. To avoid making the exposition here overly cluttered, however, we opt to present results only in the case $n > 2$.

Now we introduce the equations which will serve as constraints for the nonlocal and local control problems. For the nonlocal problem, we wish to solve the equation
\begin{equation}\label{eq: NLWkStateEqn}
    \begin{cases}
    -\Lin_K u(x) \ = \ g(x,  u(x)), \ x \in \Om \\
 \ \ \ \ \ \ \ \ \ u(x) \ = \ 0, \ \ \ \ \ \ \ \ \ \ \ \ \ \ \ \ x \in \R^n \setminus \Om
    \end{cases}.
\end{equation}
Equivalently, we say that $u$ is a critical point for the energy
\begin{equation}\label{eq: NLEnergy}
    J^K_g(u) \ := \ \frac{1}{2}[u]^2_{X(\Om)} - \int_{\Om}g(x, u(x))u(x)dx
\end{equation}
over all functions $v \in \widetilde{X}(\R^n)$, or that it solves the following equation in the weak sense for all $v \in \widetilde{X}(\R^n)$:
\begin{equation}\label{eq: NLWk}
    \iint_{\R^{2n}}K(x - y)(u(x) - u(y))(v(x) - v(y))dxdy \ = \ \int_{\Om}g(x, u(x))v(x)dx.
\end{equation}
It is easy to see that the first [Fréchet] derivative of $J^K_g$, denoted $(J^K_g)'$, is equal to
\begin{equation}\label{eq: First Frechet}
    (J^K_g)'(u)v \ = \  \iint_{\R^{2n}}K(x - y)(u(x) - u(y))(v(x) - v(y))dxdy - \int_{\Om}g(x, u(x))v(x)dx
\end{equation}
for any $u, v \in \widetilde{H}^s(\R^n)$. Meanwhile, for the local control problem, we wish to solve the equation
\begin{equation}\label{eq: LWkStateEqn}
\begin{cases}
    -\lap u(x) \ = \ g(x,  u(x)), \ x \in \Om \\
 \ \ \ \ \ \ \ \ u(x) \ = \ 0, \ \ \ \ \ \ \ \ \ \ \ \ \ \ \ \ x \in \pa \Om
    \end{cases}.
\end{equation}
Equivalently, we say that $u$ is a critical point for the energy
\begin{equation}\label{eq: LEnergy}
    J^{\loc}_g(u) \ := \ \frac{1}{2}[u]^2_{H^1(\Om)}dx - \int_{\Om}g(x, u(x))u(x)dx,
\end{equation}
or that it solves the following equation in the weak sense for all $v \in \widetilde{H^1}(\R^n)$:
\begin{equation}\label{eq: LWk}
    \int_{\Om}\grad u(x) \cdot \grad v(x) \ = \ \int_{\Om}g(x, u(x))v(x)dx.
\end{equation}

We will also be taking Sobolev Spaces on bounded subsets of $\R^{n + 1}$. If $V$ is such a set and $r \geq 1$, we define the Sobolev Space $W^{1, r}(V)$ as
\begin{equation}\label{eq: SobSpacew}
    \|v\|_{W^{1, r}(V)} \ := \ \{v \in L^r(V) \ | \ \grad v \in L^r(\Om; \R^n)\}
\end{equation}

Another thing we will need for our formal problem statements is the definitions of the cost functionals in the nonlocal and local settings. We choose a quadratic functional of the form
\begin{equation}\label{eq: CostFunc}
    I_s(u, g) \ := \ [u - u_{\text{des}}]^2_{X(\Om)} + \la \int_{\Om}|g(x, S(x))|dx,
\end{equation}
where $\la \geq 0$ is a penalization parameter, and $S \in L^{q - 1}(\Om)$ is fixed, where $q$ is the exponent introduced in Assumptions \ref{assump: AssumptionsOnControls}, \ref{Assump: lAdmis}. 

Meanwhile, in the local setting, we take
\begin{equation}\label{eq: costFuncLoc}
    I_{\loc}(u, g) \ := \ [u - u_{\text{des}}]^2_{H^1(\Om)} + \la \int_{\Om}|g(x, S(x))|dx.
\end{equation}
The physical motivation is that we may wish to displace a material to fit into a hole of a fixed shape represented by the function $u_{\des}$ belonging to $X(\Om)$ in the nonlocal setting, or to $H^1(\Om)$ in the local setting, and this acts as our specific selection criterion. 

%%%%%%%%%%%%

\subsection{Problem Statements}\label{sec: probStatement}

Now that we have defined our nonlocal and local state equations, as well as the cost functional, we may define the optimal control problems, starting with the nonlocal one.

\begin{problem}[Nonlocal optimal control problem]\label{Pr: NlCtrlProb}
    Our nonlocal control problem is to find
\begin{equation}\label{stateMinNL}
    I(\overline{u_s}, \overline{g_s}) = \min_{(u_s, g_s) \in \Alg}I(u_s, g_s),
\end{equation} 
where $\Alg$ is defined by
\begin{equation}\label{nonlocalAdmiClass}
    \Alg \ := \{(u, g) \in \widetilde{X}(\R^n) \times Z_{\text{ad}} \ | \ (J^K_g)'(u)v = 0 \ \quad \fa v \in \widetilde{X}(\R^n)\},
\end{equation}
and $Z_{\text{ad}}$ is the admissible set of controls described in Assumption \ref{assump: AssumptionsOnControls}.
\end{problem}

Now we formally state the analogous local problem.

\begin{problem}[Local optimal control problem]\label{Pr: lCtrlProb}
    Our local control problem is to find
    \begin{equation}\label{IMinL}
        I_{\loc}(\overline{u}, \overline{g}) \ = \ \min_{(u, g) \in \Alg^{\loc}}I_{\loc}(u, g)
    \end{equation}
    where $\Alg^{\loc}$ is defined by
    \begin{equation}\label{Eq: AlgLoc}
         \Alg^{\loc} \ := \ \{(u, g) \in \widetilde{H^1}(\R^n) \times Z^{\loc}_{\text{ad}} \ | \ (J^{\loc}_g)'(u)(v) = 0 \ \quad \fa v \in \widetilde{H^1}(\R^n)\},
    \end{equation}
    and $Z^{\loc}_{\text{ad}}$ is the admissible set of controls described in Assumption \ref{Assump: lAdmis}.
\end{problem}

%%%%%%%%%%%%%

\subsection{Preliminaries}\label{sec: funcAnal}

We begin this subsection by recalling some previously obtained embedding results on the aforementioned function spaces. 

\begin{lemma}\label{lem: EquivNorms}
    % Suppose $K: \R^n\setminus\{0\} \rightarrow \R$ be a kernel satisfying Assumption \ref{Assump: ker}. Then 
  Let  $\Om \subset \R^n$ be bounded, open, and Lipschitz. Then the following embedding results hold:
    \begin{enumerate}[label=\textbf{P\arabic*}]
        \item\label{P2} If $n > 2s$, then $H^s(\Om)$ is compactly embedded into $L^q(\Om)$ for every $q \in (1, 2^*_s)$. 
%         \item\label{P3} For a constant $c$ independent of $v \in H^s(\R^n)$, e have the inequality
%          \begin{equation}\label{Eq: Poincare}
%             \|v\|^2_{L^{2^*_s}(\Om)} \ \leq \ c[v]^2_{H^s(\R^n)}.
%         \end{equation}
% for all $v \in H^s(\R^n)$.
        \item\label{P4} Suppose $r > n + 1$, and $M > 0$ are fixed. Then $W^{1, r}(\Om \times (-M, M))$ is compactly embedded into $C(\Om \times (-M, M))$, the space of continuous, real-valued functions on $\Om \times (-M, M)$ (endowed with the uniform topology).
        \item\label{P5} The space $X(\Om)$ is continuously embedded into $H^s(\Om)$.% and $\widetilde{X}(\R^n)$ is continuously embedded into $\widetilde{H^s}(\R^n)$.
    \end{enumerate}
\end{lemma}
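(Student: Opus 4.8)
The plan is to prove the three embedding results by reducing each to standard results in the literature, since the spaces here are either classical fractional Sobolev spaces or the kernel-dependent space $X(\Om)$, which is comparable to $H^s(\Om)$.

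For \ref{P2}, I would invoke the classical fractional Sobolev embedding theorem (see, e.g., \cite{di2012hitchhikers}): on a bounded Lipschitz domain $\Om$ with $n > 2s$, the space $H^s(\Om)$ embeds continuously into $L^{2^*_s}(\Om)$, hence continuously into $L^q(\Om)$ for every $q \in [1, 2^*_s]$ by interpolation with $L^1(\Om)$ (using boundedness of $\Om$). To upgrade continuity to compactness for the strictly subcritical range $q \in (1, 2^*_s)$, I would use that the critical embedding combined with a standard Rellich–Kondrachov-type argument for fractional spaces yields compactness below the critical exponent; alternatively, one cites the Rellich–Kondrachov theorem for $W^{s,2}$ on bounded Lipschitz domains directly. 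This is entirely standard and requires no new work.

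For \ref{P4}, the statement is the Morrey-type embedding: $V := \Om \times (-M, M)$ is a bounded Lipschitz domain in $\R^{n+1}$, and since $r > n+1$ equals the dimension of the ambient space, the Morrey embedding gives $W^{1,r}(V) \hookrightarrow C^{0,\alpha}(\overline{V})$ with $\alpha = 1 - (n+1)/r > 0$, continuously. Then the compact embedding $C^{0,\alpha}(\overline{V}) \hookrightarrow C(\overline{V})$ (Arzelà–Ascoli: bounded sets in $C^{0,\alpha}$ are uniformly bounded and equicontinuous, hence precompact in the uniform topology, using compactness of $\overline{V}$) finishes the claim. Again, this is a chain of two classical results.

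For \ref{P5}, I would argue directly from the definitions. Given $u \in X(\Om)$, we must bound $\|u\|_{H^s(\Om)}$ by $\|u\|_{X(\Om)}$. The $L^2(\Om)$ parts agree. For the Gagliardo seminorm, note that $\Om \times \Om \subset Q$ (since $Q = \R^{2n} \setminus ((\R^n\setminus\Om)\times(\R^n\setminus\Om))$), so by assumption \ref{K2} we have $K(x-y) \geq \lambda |x-y|^{-(n+2s)}$ pointwise, and therefore
\begin{equation*}
    \lambda \iint_{\Om\times\Om}\frac{|u(x)-u(y)|^2}{|x-y|^{n+2s}}\,dx\,dy \ \leq \ \iint_{\Om\times\Om}K(x-y)|u(x)-u(y)|^2\,dx\,dy \ \leq \ [u]^2_{X(\Om)}.
\end{equation*}
Taking square roots and combining with the $L^2$ bound gives $\|u\|_{H^s(\Om)} \leq \max\{1, \lambda^{-1/2}\}\|u\|_{X(\Om)}$, which is the desired continuous embedding. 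I expect the main (though still mild) obstacle to be bookkeeping: making sure the region of integration $\Om\times\Om$ is genuinely contained in $Q$ under the paper's convention for $Q$, and tracking which constants depend on $\lambda$ and $\Om$; none of the three parts involves a substantive difficulty, as each is a citation or a one-line comparison.
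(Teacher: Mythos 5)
Your proposal is correct and follows essentially the same approach as the paper: all three parts are reductions to standard results, which the paper handles by citing Demengel for \ref{P2}, Brezis (Theorem 9.16, Morrey embedding on a bounded domain of $\R^{n+1}$) for \ref{P4}, and Servadei--Valdinoci for \ref{P5}. The only cosmetic difference is that you write out the one-line kernel comparison for \ref{P5} using \ref{K2} and $\Om\times\Om\subset Q$ rather than citing it, which is exactly what the referenced lemma does.
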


% Note that \ref{P3} is part of \cite[Lemma 6]{servadei2012mountain}; 
Note that \ref{P2} is part of \cite[Theorem 4.54]{demengel2012functional}. In addition, since $\Om \times (-M, M)$ is a bounded subset of $\R^{n + 1}$, \ref{P4} follows from  \cite[Theorem 9.16]{Bre}. Finally, \ref{P5} is part of \cite[Lemma 5]{servadei2012mountain}.

Now we recap the relevant existence results for nontrivial solutions to the local and nonlocal state equations. The two results are quite similar in nature, as they are both applications of the famous Mountain Pass Theorem; the key difference is the critical exponent in the growth condition on the nonlinearity. We state first the result for the nonlocal problem.

\begin{proposition}[Existence of nontrivial solutions to nonlocal state equation]\label{Prop: exsNLState}
    Suppose that $g \in Z_{\text{ad}}$ satisfies Assumption \ref{assump: AssumptionsOnControls}. Then there exists a nontrivial solution $u \in \widetilde{X}(\R^n)$ to Equation \eqref{eq: NLWkStateEqn}.
\end{proposition}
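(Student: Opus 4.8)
The plan is to verify that the energy functional $J^K_g$ defined in \eqref{eq: NLEnergy} satisfies the hypotheses of the Mountain Pass Theorem (in the form given in, e.g., \cite{rabinowitz1986minimax} or \cite{servadei2012mountain}), namely: (i) $J^K_g$ is $C^1$ on $\widetilde{X}(\R^n)$ with $J^K_g(0)=0$; (ii) there exist $\rho, \alpha > 0$ such that $J^K_g(u) \ge \alpha$ whenever $\|u\|_{X(\Om)} = \rho$; (iii) there exists $e \in \widetilde{X}(\R^n)$ with $\|e\|_{X(\Om)} > \rho$ and $J^K_g(e) \le 0$; and (iv) $J^K_g$ satisfies the Palais--Smale condition. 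Granting these, the Mountain Pass Theorem produces a critical point $u$ at the mountain pass level $c \ge \alpha > 0$, and since $J^K_g(0)=0 < c$, this $u$ is nontrivial and solves \eqref{eq: NLWkStateEqn} in the weak sense by \eqref{eq: First Frechet}.

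I would carry out the steps in this order. First, the $C^1$ regularity and the formula for $(J^K_g)'$ are already recorded in \eqref{eq: First Frechet}; the growth bound \ref{A2} together with the embedding \ref{P5} and \ref{P2} ($X(\Om) \hookrightarrow H^s(\Om) \hookrightarrow\hookrightarrow L^q(\Om)$ for $q \in (2, 2^*_s)$) makes the nonlinear term well-defined and continuously differentiable. Second, for the geometry near zero (ii): using \ref{A3} and \ref{A2}, for any $\varepsilon>0$ one has $|G(x,t)| \le \varepsilon t^2 + C_\varepsilon |t|^q$, so $\int_\Om G(x,u)\,dx \le \varepsilon \|u\|_{L^2}^2 + C_\varepsilon \|u\|_{L^q}^q$; combining with the Sobolev/fractional embedding constants and coercivity of $[\cdot]_{X(\Om)}^2$ on $\widetilde{X}(\R^n)$ (via \ref{K2} and the fractional Poincaré inequality), the term $\tfrac12[u]_{X(\Om)}^2$ dominates for $\|u\|_{X(\Om)}$ small since $q > 2$. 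Third, for (iii): the Ambrosetti--Rabinowitz condition \ref{A4} forces $G(x,t) \ge c_1 |t|^\mu - c_2$ for $|t|$ large, with $\mu > 2$; fixing any $u_0 \not\equiv 0$ and evaluating $J^K_g(t u_0)$ as $t \to \infty$, the $-\int_\Om G(x, t u_0)$ term grows like $-t^\mu$ while $\tfrac12[t u_0]_{X(\Om)}^2$ grows like $t^2$, so $J^K_g(t u_0) \to -\infty$ and we may take $e = t_0 u_0$ for $t_0$ large.

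The main obstacle, as usual for mountain pass arguments, is step (iv), the Palais--Smale condition. The standard route: given a PS sequence $(u_k)$ with $J^K_g(u_k)$ bounded and $(J^K_g)'(u_k) \to 0$, one first shows $(u_k)$ is bounded in $\widetilde{X}(\R^n)$ by forming $J^K_g(u_k) - \tfrac1\mu (J^K_g)'(u_k)u_k$ and using \ref{A4} to absorb the nonlinear terms, leaving $(\tfrac12 - \tfrac1\mu)[u_k]_{X(\Om)}^2$ controlled (here $\mu > 2$ is essential), plus handling the region $|u_k| < \omega$ via \ref{A2}. Then, by reflexivity, pass to a weakly convergent subsequence $u_k \rightharpoonup u$; the compact embedding \ref{P2}/\ref{P5} gives $u_k \to u$ strongly in $L^q(\Om)$, and testing $(J^K_g)'(u_k) - (J^K_g)'(u)$ against $u_k - u$ shows $[u_k - u]_{X(\Om)} \to 0$ because the nonlinear difference $\int_\Om (g(x,u_k) - g(x,u))(u_k - u)\,dx \to 0$ (using \ref{A2} and $L^q$-strong convergence, with $q-1 < 2_s^* - 1$ giving the dual-exponent bound), yielding strong convergence in $\widetilde{X}(\R^n)$. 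I would cite \cite{servadei2012mountain, servadei2013variational} for the nonlocal functional-analytic details, since the setup here is precisely theirs, and note that \ref{A1} (the $W^{1,r}$ bound on $g$) plays no role in this particular existence statement — it is needed later for the control problem, not for Proposition \ref{Prop: exsNLState} itself.
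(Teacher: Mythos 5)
Your proposal is correct in substance and takes the same conceptual route as the paper: the paper's proof is a one-line citation of \cite[Theorem 1]{servadei2012mountain}, which is precisely the Mountain Pass Theorem argument you reconstruct (geometry near zero from \ref{A3} and \ref{A2}, unboundedness below along rays from \ref{A4}, Palais--Smale via the Ambrosetti--Rabinowitz trick and the compact embedding \ref{P2}/\ref{P5}). You fill in the details that the paper outsources.

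One small but genuine inaccuracy: you assert that \ref{A1} ``plays no role in this particular existence statement.'' That is not quite right, and the paper's proof explicitly says otherwise. The hypotheses \ref{A2}--\ref{A4} constrain the size and growth of $g$ but do not, by themselves, guarantee that $g$ is a Carath\'eodory function, which is an explicit hypothesis of \cite[Theorem 1]{servadei2012mountain} and is implicitly used throughout your own argument (e.g.\ to make $u \mapsto \int_\Om G(x,u)\,dx$ well-defined and $C^1$, and to justify the a.e.\ convergence step in the Palais--Smale argument). Because $r > n+1$, \ref{A1} forces $g$ to lie in $W^{1,r}(\Om\times(-M,M)) \hookrightarrow C(\Om\times(-M,M))$ for every $M$, so $g$ is in fact continuous; this is how the paper discharges the Carath\'eodory hypothesis. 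So \ref{A1} is doing double duty: it makes $Z_{\text{ad}}$ weakly closed for the control problem, and it also supplies the regularity needed to run the Mountain Pass argument. The rest of your proof is sound and matches the intended argument.
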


\begin{proof}
    The assumption \ref{A1} is included to ensure the admissible set of controls is weakly closed, and also implies that $g$ is Carathéodory; the other assumptions are those needed to invoke \cite[Theorem 1]{servadei2012mountain}.
\end{proof}

Moreover, note that according to \cite[Corollary 13]{servadei2012mountain}, each admissible control will have at least one non-trivial non-negative solution, and at least one non-trivial non-positive solution. As a result the admissible sets $\Alg$ and $\Alg^{\loc}$ will include pairs where the state is non-positive.

Now we state the analogous result for the local problem \cite{rabinowitz1986minimax} (see also \cite[Section 8.4]{Ev} for a proof written in the case where the right-hand side is autonomous, i.e., $g(x, u(x)) = g(u(x))$).

\begin{proposition}[Existence of nontrivial solutions to local state equation]\label{Prop: exsLState}
    Suppose that $g \in Z^{\loc}_{\text{ad}}$ satisfies Assumption \ref{Assump: lAdmis}. Then there exists a nontrivial solution $u \in \widetilde{H^1}(\R^n)$ to Equation \eqref{eq: LWkStateEqn}.
\end{proposition}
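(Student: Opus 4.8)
The plan is to mirror exactly the strategy behind Proposition \ref{Prop: exsNLState}, but in the local setting, invoking the Mountain Pass Theorem for the $C^1$ functional $J^{\loc}_g$ on the Hilbert space $\widetilde{H^1}(\R^n)$ rather than the nonlocal functional $J^K_g$ on $\widetilde{X}(\R^n)$. Concretely: first I would verify that $J^{\loc}_g$ is well-defined and of class $C^1$ on $\widetilde{H^1}(\R^n)$, with derivative given by the left-hand side of \eqref{eq: LWk} minus the nonlinear term; here the subcritical growth hypothesis \ref{A2} with $q < 2^*_1 = \frac{2n}{n-2}$ and the Sobolev embedding $\widetilde{H^1}(\R^n) \hookrightarrow L^q(\Om)$ make the Nemytskii operator associated with $g$ continuous, so the energy functional and its derivative are continuous. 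Assumption \ref{A1} again guarantees $g$ is Carathéodory, which is what licenses the use of these Nemytskii-operator facts.

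Next I would check the three geometric/compactness hypotheses of the Mountain Pass Theorem. The Palais--Smale condition follows from the Ambrosetti--Rabinowitz condition \ref{A4}: along a PS sequence $(u_k)$, combining the bound on $J^{\loc}_g(u_k)$ with the bound on $(J^{\loc}_g)'(u_k)u_k$ and \eqref{Eq: GGG} (with $\mu > 2$) yields an a priori $H^1$-bound on $(u_k)$, after which the compact embedding into $L^q(\Om)$ upgrades weak convergence to strong convergence of the nonlinear term and hence strong $H^1$ convergence. For the mountain-pass geometry: the local behavior \ref{A3}, together with \ref{A2} to control $G(x,t)$ for large $t$, gives $G(x,t) \leq \varepsilon t^2 + C_\varepsilon |t|^q$, so for $\|u\|$ small $J^{\loc}_g(u) \geq \alpha > 0$ on a small sphere; and for the "far" point, \ref{A4} implies $G(x,t) \geq c|t|^\mu - C$ with $\mu > 2$, so $J^{\loc}_g(t_0 \varphi) \to -\infty$ along any fixed $\varphi \not\equiv 0$ as $t_0 \to \infty$. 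Since $J^{\loc}_g(0) = 0$, the Mountain Pass Theorem produces a critical point $u$ at the positive mountain-pass level, which is therefore nontrivial; this $u$ solves \eqref{eq: LWk}, equivalently \eqref{eq: LWkStateEqn}.

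The honest statement, though, is that this is essentially a citation: the claimed result is precisely the classical semilinear Dirichlet-problem application of the Mountain Pass Theorem as presented in \cite{rabinowitz1986minimax} (and, in the autonomous case, \cite[Section 8.4]{Ev}), with Assumption \ref{Assump: lAdmis} engineered to be exactly the standard hypothesis set (AR condition, subcritical growth, superlinearity at the origin), plus \ref{A1} to ensure $g$ is Carathéodory. So the cleanest proof is: observe that \ref{A1}--\ref{A4} under $q < 2^*_1$ coincide with the hypotheses of the relevant theorem in \cite{rabinowitz1986minimax}, and apply it. The only point requiring a remark is that in the paper's formulation $g = g(x,t)$ is genuinely nonautonomous, so one should cite the version of the theorem allowing $x$-dependence (as in \cite{rabinowitz1986minimax}) rather than the autonomous exposition of \cite{Ev}; the uniformity in $x$ built into \ref{A3}, and the boundedness/continuity consequences of \ref{A1}, are what make the nonautonomous case go through with no change.

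I do not expect a genuine obstacle here — there is no new difficulty relative to the nonlocal case, the main term of the operator being the classical Dirichlet energy $\frac{1}{2}\int_\Om |\nabla u|^2$, which is even better behaved than $\frac{1}{2}[u]^2_{X(\Om)}$. If anything, the single thing to be careful about is bookkeeping the critical exponent: every place where $2^*_s$ and the compact embedding \ref{P2} were used in the nonlocal argument must be replaced by $2^*_1$ and the classical Rellich--Kondrachov theorem, and one must confirm $q < 2^*_1$ is exactly what is needed for compactness in the PS argument — which it is.
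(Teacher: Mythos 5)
Your proposal is correct and, after the detailed Mountain Pass sketch, arrives at exactly the paper's argument: observe that Assumption~\ref{Assump: lAdmis} (using \ref{P4} and \ref{A1} to get continuity of $g$ on $\overline{\Om}\times[-M,M]$) matches the hypotheses of \cite[Theorem 2.15]{rabinowitz1986minimax} and cite that theorem. The paper's proof is just this one-line citation, so the approaches coincide.
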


\begin{proof}
    Due to \ref{P4}, the admissible control $g$ automatically satisfies all of the conditions needed to invoke \cite[Theorem 2.15]{rabinowitz1986minimax}.
\end{proof}

%%%%%%%%%%%

\section{Foundations}\label{sec: Foundation}

\subsection{Construction of Admissible Set of Controls}\label{subsec: Zad}

We make some comments on the choice of Assumptions \ref{assump: AssumptionsOnControls} and \ref{Assump: lAdmis}.

The first thing to note is that the admissible set $Z_{\text{ad}}$ is nonempty. The prototypical example of a datum $g$ considered in \cite{servadei2013variational, servadei2012mountain} is the function $g(x, t) \ := \ a(x)|t|^{q - 2}t$ for a function $a \in L^{\infty}(\Om)$. If $a \in W^{1, r}(\Om)$ then it is easy to see that all parts of Assumption \ref{assump: AssumptionsOnControls} are satisfied. Moreover, Assumption \ref{A1} can be viewed as a ``soft" boundedness condition; as we will see, combining this with the embedding \ref{P4} will be sufficient for proving existence of solutions to Problem \ref{Pr: NlCtrlProb}, we do not want to assume admissible controls belong to $W^{1, r}(\Om \times \R)$ as then controls which are super-quadratic in the second argument are no longer admissible.

Now we discuss the need for admissible controls to belong to first-order Sobolev Spaces. Just as in the study of optimal control problems with linear constraints, the key to proving existence of solutions is to have a nonempty, closed, bounded, convex subset of a reflexive Banach space as the admissible set of controls. The important difference in this setting is we need the admissible set of controls to be closed with respect to the uniform topology, instead of merely in a weak $L^p$ topology. This classical theory is described in much detail in the textbook \cite{troltzsch2010optimal}, and some recent papers on nonlocal optimal control problems where the admissible set of controls is a bounded subset of an $L^p$ space include \cite{munoz2022local, mengesha2023control, d2019priori}.

The content of the next lemma is that the admissible set of controls for the nonlocal problem is also convex and closed.

\begin{lemma}[Convex and closed]\label{lem: closedAndConvex}
     If $Z_{\text{ad}}$ satisfies Assumption \ref{assump: AssumptionsOnControls}, then this set is a convex, closed subset of $W^{1, r}(\Om \times (-M, M))$ for any $M > 0$.
\end{lemma}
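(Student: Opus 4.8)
The plan is to verify convexity and closedness separately, treating $M>0$ as fixed throughout and writing $V := \Om \times (-M,M) \subset \R^{n+1}$.

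For convexity, I would take $g_0, g_1 \in Z_{\text{ad}}$ and $\theta \in [0,1]$, set $g_\theta := (1-\theta)g_0 + \theta g_1$, and check each of \ref{A1}--\ref{A4} in turn. Conditions \ref{A1} and \ref{A2} are immediate from the triangle inequality (for \ref{A1}, $\|g_\theta\|_{W^{1,r}(V)} \le (1-\theta)C(M) + \theta C(M) = C(M)$; for \ref{A2}, the bound $a_1 + a_2|t|^{q-1}$ is preserved under convex combinations). Condition \ref{A3} is linear in $g$ so the limit $\lim_{t\to 0} g_\theta(x,t)/t = 0$ holds uniformly. The one genuinely delicate item is \ref{A4}: since $G$ depends linearly on $g$, we have $G_\theta = (1-\theta)G_0 + \theta G_1$, and one must check that $0 < \mu G_\theta(x,t) \le t g_\theta(x,t)$ for $|t|\ge \om$ — but here the threshold $\om$ and constant $\mu$ are the same fixed constants for both $g_0,g_1$ by Assumption \ref{assump: AssumptionsOnControls}, so both inequalities are convex combinations of the corresponding inequalities for $g_0$ and $g_1$ (the strict lower bound $\mu G_\theta > 0$ follows since a convex combination of two strictly positive numbers is strictly positive). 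So convexity reduces to bookkeeping, provided one is careful that all the structural constants are shared across the admissible set.

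For closedness in $W^{1,r}(V)$, I would take a sequence $g_k \in Z_{\text{ad}}$ with $g_k \to g$ in $W^{1,r}(V)$ and show $g \in Z_{\text{ad}}$. Passing to a subsequence, $g_k \to g$ pointwise a.e. on $V$, and by the compact embedding \ref{P4} of $W^{1,r}(\Om\times(-M,M))$ into $C(\Om\times(-M,M))$ we actually get (along a further subsequence) uniform convergence on $\overline V$, hence $g$ has a continuous representative and $g_k \to g$ uniformly on $\Om \times (-M,M)$. Now: \ref{A1} passes to the limit because the norm is weakly lower semicontinuous (indeed $\|g\|_{W^{1,r}(V)} \le \liminf_k \|g_k\|_{W^{1,r}(V)} \le C(M)$); \ref{A2} and the lower bound in \ref{A4} (and \ref{A3}) are pointwise/limiting inequalities stable under uniform — or even just pointwise a.e. — convergence on $\Om\times(-M,M)$. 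The subtlety is that \ref{A2}, \ref{A3}, \ref{A4} are assertions about \emph{all} $t\in\R$, whereas convergence $g_k\to g$ in $W^{1,r}(V)$ only controls $|t| < M$. This is exactly why the lemma is stated "for any $M>0$": to conclude $g\in Z_{\text{ad}}$ one argues that if $g$ is the $W^{1,r}$-limit on every $\Om\times(-M,M)$ of a sequence from $Z_{\text{ad}}$, then by a diagonal argument (or by noting the limit is the same function on overlapping slabs) the limiting inequalities hold on all of $\Om\times\R$. I would make explicit that the statement "$Z_{\text{ad}}$ is closed in $W^{1,r}(\Om\times(-M,M))$ for any $M$" is understood in the sense that a limit of $Z_{\text{ad}}$-functions that converges in $W^{1,r}$ on each slab lies in $Z_{\text{ad}}$; for a single fixed $M$ one still gets that the limit satisfies \ref{A1} on $V$ plus \ref{A2}--\ref{A4} for $|t|<M$, and then the $G$-antiderivative in \ref{A4} is recovered by $G(x,t) = \int_0^t g(x,\tau)\,d\tau = \lim_k \int_0^t g_k(x,\tau)\,d\tau = \lim_k G_k(x,t)$ using uniform convergence on compact $t$-intervals.

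The main obstacle, then, is not any single estimate but keeping the $t$-domains straight: the growth/AR conditions are global in $t$, the Sobolev topology is local in $t$, and one must phrase closedness so these are compatible — handling \ref{A4} requires passing the limit through both the definition of $G$ (via the integral, using the uniform convergence from \ref{P4}) and the two-sided inequality \eqref{Eq: GGG}. I would organize the write-up as: (i) fix $M$, recall $V=\Om\times(-M,M)$; (ii) convexity, checking \ref{A1}--\ref{A4}; (iii) closedness, extracting a.e./uniform limits via \ref{P4}, using weak lower semicontinuity for \ref{A1}, pointwise stability for \ref{A2}--\ref{A3}, and the integral identity for $G$ plus inequality passage for \ref{A4}; (iv) remark that nonemptiness was already observed (the prototype $g(x,t)=a(x)|t|^{q-2}t$ with $a\in W^{1,r}(\Om)$), so $Z_{\text{ad}}$ is a nonempty closed convex subset of the reflexive space $W^{1,r}(V)$, which is the property needed in Section \ref{Sec: Existence}.
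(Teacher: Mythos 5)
Your convexity argument matches the paper's, and the extra detail you spend on \ref{A4} is worth having: the paper simply says \ref{A2}--\ref{A4} follow "by taking convex combinations of the constituent inequalities and equations," and you make explicit that this hinges on $\om$ and $\mu$ being the \emph{same} fixed constants across all of $Z_{\text{ad}}$, which is indeed how Assumption~\ref{assump: AssumptionsOnControls} is set up.

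On closedness, you and the paper diverge in two ways. First, you test against a \emph{strongly} convergent sequence $g_k\to g$ in $W^{1,r}(V)$, whereas the paper takes $g_k\rightharpoonup g$ \emph{weakly} and then uses the compact embedding \ref{P4} to upgrade weak convergence to uniform convergence on $\Om\times(-M,M)$, recovering \ref{A1} from reflexivity (weak lower semicontinuity of the norm). Your route is not wrong: for a convex set, strong closedness plus convexity gives weak sequential closedness (Mazur), which is the property actually used in the proof of Theorem~\ref{Thm: ExMinNonlocalProb}. But note you are implicitly relying on that fact; the paper's direct weak-closedness argument is closer to what gets used downstream, and in your strong-convergence setting you do not need compactness or a further subsequence to get uniform convergence (continuity of the embedding suffices). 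Your remark about the global-in-$t$ versus slab-local mismatch is a genuine subtlety that the paper's proof passes over silently, and your resolution (same limit on overlapping slabs / diagonal argument) is the right one.

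The one genuine gap is your treatment of \ref{A3}. You lump it with \ref{A2} and \ref{A4} as "pointwise/limiting inequalities stable under uniform --- or even just pointwise a.e.\ --- convergence," but \ref{A3} is not a pointwise inequality in $(x,t)$: it is the assertion that $\sup_{x\in\Om}|g(x,t)/t|\to 0$ as $t\to 0$, an iterated-limit statement. Pointwise a.e.\ convergence of $g_k\to g$ gives nothing here, and even uniform convergence does not obviously pass through, because $|g_k(x,t)-g(x,t)|/|t|$ is not controlled near $t=0$ by $\|g_k-g\|_\infty$. The paper explicitly treats \ref{A3} as a separate step and invokes the Moore--Osgood theorem on interchange of limits; whatever one thinks of the details of that application, it at least identifies the correct difficulty. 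Your write-up should isolate \ref{A3} and argue the limit interchange (or cite Moore--Osgood as the paper does) rather than folding it into the pointwise-stability bucket, where the claim as you stated it is false.
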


\begin{proof}
The convexity of $Z_{\text{ad}}$ is immediate. Assumption \ref{A1} is satisfied due to the Triangle Inequality on the $\|\cdot\|_{W^{1, r}(\Om \times (-M, M))}$ norm, whereas \ref{A2}, \ref{A3}, and \ref{A4} are all satisfied by taking convex combinations of the constituent inequalities and equations.

Let $M > 0$ be fixed, and suppose we have a sequence $\{g_k\}^{\infty}_{k = 1} \subset Z_{\text{ad}}$ such that $g_k \rightharpoonup g$ in the weak $W^{1, r}(\Om \times (-M, M))$ topology. Then we show that $Z_{\text{ad}}$ is closed, i.e. that $g \in Z_{\text{ad}}$. First, notice that since $W^{1, r}(\Om \times (-M, M))$ is reflexive, $g$ automatically satisfies \ref{A1}. 

Now, since $r \in (n + 1, \infty)$ and $\Om \times (-M, M)$ is a bounded subset of $\R^{n + 1}$, we have by \cite[Theorem 9.16]{Bre} that $g_k \rightarrow g$ uniformly in $\Om \times (-M, M)$, and so assumptions \ref{A2} and \ref{A4} are immediately satisfied for $g$. Finally, we prove \ref{A3}: since each $g_k$ satisfies \eqref{eq: uniformLimit} we have that
\begin{equation}\label{eq: closedAndConvexUnifConvEq1}
    \lim_{t \rightarrow 0}\frac{g_k(x, t)}{t} \ = \ 0
\end{equation}
uniformly over $x \in \Om$. Since $g_k \rightarrow g$ uniformly on $\Om \times (-M, M)$ we may invoke the Moore-Osgood Theorem to see that
\begin{equation}\label{eq: closedAndConvexUnifConvEq2}
    \lim_{t \rightarrow 0^+}\frac{g(x, t)}{t} \ = \ 0
\end{equation}
uniformly over $x \in \Om$. Thus $g \in Z_{\text{ad}}$.
\end{proof}

Now we note that just as in the nonlocal case, the function $g(x, t) := a(x)|t|^{q - 2}t$, where $a \in W^{1, r}(\Om)$, ensures that $Z^{\loc}_{\text{ad}}$ is nonempty. In addition, we have the following result for the local problem, by the same argument as used to Prove Lemma \ref{lem: closedAndConvex}.

\begin{lemma}[Convex and closed]\label{lem: closedAndConvexL}
     If $Z^{\loc}_{\text{ad}}$ satisfies Assumption \ref{Assump: lAdmis}, then this set is a convex, closed subset of $W^{1, r}(\Om \times (-M, M))$ for any $M > 0$.
\end{lemma}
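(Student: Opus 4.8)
The plan is to mimic, essentially verbatim, the proof of Lemma \ref{lem: closedAndConvex}, since the only structural difference between $Z_{\text{ad}}$ and $Z^{\loc}_{\text{ad}}$ is the value of the critical exponent in \ref{A2} ($q < 2^*_1$ instead of $q < 2^*_s$), and this exponent plays no role whatsoever in establishing convexity or closedness. First I would observe that the conditions \ref{A1}--\ref{A4} are imposed on $g$ as a function on $\Om \times (-M,M)$ (or on $\Om \times \R$) and do not reference the state equation, the operator $\Lin_K$, or the space $\widetilde{X}(\R^n)$ versus $\widetilde{H^1}(\R^n)$; hence the entire argument takes place within the fixed space $W^{1,r}(\Om \times (-M,M))$, exactly as before.

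The key steps, in order, are as follows. \textbf{Convexity:} given $g_0, g_1 \in Z^{\loc}_{\text{ad}}$ and $\theta \in [0,1]$, set $g_\theta := (1-\theta)g_0 + \theta g_1$; \ref{A1} follows from the triangle inequality for the $W^{1,r}$-norm; \ref{A2} follows by taking the convex combination of the two pointwise bounds $|g_i(x,t)| \le a_1 + a_2|t|^{q-1}$ (note the bound is affine in $g$ and the right side is independent of $i$); \ref{A3} follows since a convex combination of two functions each tending to $0$ tends to $0$, uniformly; and \ref{A4} follows because $G_\theta = (1-\theta)G_0 + \theta G_1$ by linearity of the integral defining $G$ in \eqref{eq: Gantideriv}, and both the inequality $\mu G_i(x,t) \le t g_i(x,t)$ and the strict positivity $0 < \mu G_i(x,t)$ survive convex combination. \textbf{Closedness:} fix $M > 0$ and take $\{g_k\} \subset Z^{\loc}_{\text{ad}}$ with $g_k \rightharpoonup g$ weakly in $W^{1,r}(\Om \times (-M,M))$. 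Since $W^{1,r}(\Om \times (-M,M))$ is reflexive and $Z^{\loc}_{\text{ad}}$ is bounded in its norm, the weak limit $g$ satisfies \ref{A1}. Since $r > n+1$ and $\Om \times (-M,M) \subset \R^{n+1}$ is bounded, \ref{P4} (equivalently \cite[Theorem 9.16]{Bre}) gives a compact embedding $W^{1,r}(\Om \times (-M,M)) \hookrightarrow C(\Om \times (-M,M))$, so $g_k \to g$ uniformly on $\Om \times (-M,M)$; passing to the limit in the pointwise inequalities \ref{A2} and \ref{A4} preserves them, and for \ref{A3} one invokes the Moore--Osgood theorem exactly as in Lemma \ref{lem: closedAndConvex} to interchange the $t \to 0$ and $k \to \infty$ limits. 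Therefore $g \in Z^{\loc}_{\text{ad}}$.

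I do not anticipate a genuine obstacle here; the lemma is a direct transcription. The only point that deserves a word of care is that closedness is being asserted with respect to the \emph{weak} $W^{1,r}$ topology on a fixed slab $\Om \times (-M,M)$ — which is what the application in Section \ref{Sec: Existence} will need — and that the compact embedding \ref{P4} is what upgrades weak convergence to the uniform convergence needed to pass to the limit in the pointwise-in-$(x,t)$ conditions \ref{A2}--\ref{A4}. Because nothing in steps above uses the specific size of $q$ beyond $q > 2$ (which is common to both Assumptions \ref{assump: AssumptionsOnControls} and \ref{Assump: lAdmis}), the proof of Lemma \ref{lem: closedAndConvex} applies word for word, and I would simply write ``The proof is identical to that of Lemma \ref{lem: closedAndConvex}, since the exponent $q$ enters only through the right-hand side of \ref{A2}, which is unaffected by convex combinations and by uniform limits.''
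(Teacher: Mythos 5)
Your proposal is correct and takes exactly the same approach as the paper, which gives no separate proof and simply states that the result holds ``by the same argument as used to prove Lemma \ref{lem: closedAndConvex}.'' Your observation that the only change is the critical exponent in \ref{A2}, which plays no role in convexity or closedness, is precisely the justification the paper implicitly relies on.
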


%%%%%%%%%%%

\subsection{Preserving Admissibility in the Limit}\label{sec: admissibility}

The goal of this subsection is to show that the limit of a sequence of admissible pairs (in the sense of solving the nonlocal state equation) remains admissible. This theory will be applied to minimizing sequences of controls when we prove our main existence result in Section \ref{Sec: Existence}. %This lemma is analogous to \cite[Lemma 4.2]{cueto2024localization}, but with Lemma \ref{lem: closedAndConvex} in mind, we will be assuming uniform convergence on the controls rather than weak convergence in an $L^p$ space.

\begin{lemma}\label{lem: theLSCLemma}
Suppose $\{g_k\}^{\infty}_{k = 1} \subset Z_{\text{ad}}$ is a sequence converging to $g$ uniformly in $\Om \times (-M, M)$ for all $M > 0$. Further suppose that $\{u_k\}^{\infty}_{k = 1} \subset \widetilde{X}(\R^n)$ is a sequence such that, for all $k \in \N^+$ and $v \in \widetilde{X}(\R^n)$, we have
    \begin{equation}\label{eq: EnergiesNLgIneq}
        (J^K_{g_k})'(u_k)v \ = \ 0;
    \end{equation}
    that is, $(u_k, g_k) \in \Alg$ for all $k \in \N^+$. Finally assume that $u_k \rightharpoonup u$ weakly in $\widetilde{X}(\R^n)$ to some $u \in \widetilde{X}(\R^n)$. Then $(u, g) \in \Alg$, that is for all $v \in \widetilde{X}(\R^n)$ we have
    \begin{equation}\label{eq: EnergiesNLgIneqDes}
        (J^K_g)'(u)v \ = \ 0.
    \end{equation}
\end{lemma}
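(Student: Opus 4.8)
The plan is to pass to the limit in the weak formulation \eqref{eq: NLWk}: I want to show that for each fixed test function $v \in \widetilde{X}(\R^n)$,
\[
\iint_{\R^{2n}}K(x-y)(u_k(x)-u_k(y))(v(x)-v(y))\,dx\,dy \longrightarrow \iint_{\R^{2n}}K(x-y)(u(x)-u(y))(v(x)-v(y))\,dx\,dy
\]
on the left-hand side, and simultaneously that
\[
\int_{\Om}g_k(x,u_k(x))v(x)\,dx \longrightarrow \int_{\Om}g(x,u(x))v(x)\,dx
\]
on the right-hand side. The left-hand convergence is immediate: the bilinear form $(w,v)\mapsto \iint_Q K(x-y)(w(x)-w(y))(v(x)-v(y))\,dx\,dy$ is precisely the inner product on the Hilbert space $\widetilde{X}(\R^n)$ (up to the $L^2$ part of the norm), so weak convergence $u_k \rightharpoonup u$ gives convergence of this pairing against the fixed element $v$ directly from the definition of weak convergence.

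The bulk of the work is the convergence of the nonlinear term, and that is where I expect the main obstacle to lie. First I would note that by \ref{P5} the embedding $\widetilde{X}(\R^n)\hookrightarrow H^s(\Om)$ is continuous, so $\{u_k\}$ is bounded in $H^s(\Om)$; then by \ref{P2}, since $n>2s$, $H^s(\Om)$ embeds compactly into $L^q(\Om)$ for the subcritical exponent $q\in(2,2^*_s)$ from Assumption \ref{assump: AssumptionsOnControls}, so after passing to a subsequence $u_k \to u$ strongly in $L^q(\Om)$ and pointwise a.e.\ in $\Om$. Now I need to control $g_k(\cdot,u_k(\cdot))$. Fix a test function $v$; I would split the integral $\int_\Om g_k(x,u_k(x))v(x)\,dx$ and compare it to $\int_\Om g(x,u(x))v(x)\,dx$ by writing
\[
g_k(x,u_k(x)) - g(x,u(x)) = \big(g_k(x,u_k(x)) - g(x,u_k(x))\big) + \big(g(x,u_k(x)) - g(x,u(x))\big).
\]
For the second bracket I would use pointwise a.e.\ convergence $u_k(x)\to u(x)$ together with continuity of $g$ in its second argument (which holds since $g\in W^{1,r}$, hence continuous by \ref{P4}); for the first bracket, the catch is that $u_k(x)$ need not lie in a fixed bounded interval $(-M,M)$, so the hypothesis ``$g_k\to g$ uniformly on $\Om\times(-M,M)$ for all $M$'' is not by itself enough pointwise. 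The remedy is a uniform-integrability / truncation argument: by \ref{A2}, $|g_k(x,t)|\le a_1 + a_2|t|^{q-1}$ with $a_1,a_2$ independent of $k$, so $|g_k(x,u_k(x))v(x)| \le (a_1 + a_2|u_k(x)|^{q-1})|v(x)|$, and since $u_k$ is bounded in $L^q$ and $v\in L^q$ (indeed $v\in \widetilde X(\R^n)\subset L^q$), Hölder gives that the family $\{g_k(\cdot,u_k)\,v\}$ has a dominating-type bound; combined with $L^q$ convergence $u_k\to u$ one upgrades to equi-integrability of $\{g_k(x,u_k(x))v(x)\}_k$. Then on the set $\{|u_k|\le M\}$ the uniform convergence $g_k\to g$ applies directly, while the contribution of $\{|u_k|>M\}$ is made small uniformly in $k$ by equi-integrability (choosing $M$ large, using $\|u_k\|_{L^q}$ bounded and Chebyshev). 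Passing $k\to\infty$ then $M\to\infty$ yields $\int_\Om g_k(x,u_k(x))v(x)\,dx \to \int_\Om g(x,u(x))v(x)\,dx$.

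Combining the two limits shows that $(J^K_g)'(u)v = 0$ for every $v\in\widetilde X(\R^n)$, i.e.\ \eqref{eq: EnergiesNLgIneqDes} holds. Finally, I must check that $(u,g)$ is genuinely in $\Alg$ and not just that the Euler equation holds: $u\in\widetilde X(\R^n)$ by hypothesis, and $g\in Z_{\text{ad}}$ follows from Lemma \ref{lem: closedAndConvex} (uniform convergence of $g_k\to g$ on each $\Om\times(-M,M)$ implies $g$ satisfies \ref{A1}--\ref{A4}, using the Moore--Osgood argument for \ref{A3} exactly as in that lemma's proof). One bookkeeping point to flag: the strong $L^q$ convergence and a.e.\ convergence are obtained only along a subsequence, but since the limiting identity \eqref{eq: EnergiesNLgIneqDes} is independent of the subsequence and $u$ is already fixed by the weak convergence assumption, a standard subsequence-of-every-subsequence argument shows the conclusion holds without passing to a further subsequence. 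The main obstacle, to reiterate, is handling the nonlinear term when the $u_k$ are not uniformly bounded in $L^\infty$ — this is precisely the new difficulty the paper alludes to, and it is resolved by the interplay of the subcritical growth bound \ref{A2}, the compact embedding \ref{P2}, and the local-uniform convergence of the $g_k$.
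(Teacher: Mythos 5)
Your argument is correct, and it takes a genuinely different route from the paper's proof. Both reduce to passing to the limit in the nonlinear term $\int_\Om g_k(x,u_k(x))v(x)\,dx$, but the decompositions differ. You write
\[
g_k(x,u_k(x)) - g(x,u(x)) = \bigl(g_k(x,u_k(x)) - g(x,u_k(x))\bigr) + \bigl(g(x,u_k(x)) - g(x,u(x))\bigr),
\]
so the ``change of control'' bracket is handled by local-uniform convergence $g_k\to g$ plus a Chebyshev/equi-integrability cutoff, and the ``change of state'' bracket involves only the \emph{fixed} limit $g$, so pointwise a.e.\ convergence follows directly from continuity of $g$ and $u_k\to u$ a.e. The paper instead writes
\[
g_k(x,u_k(x)) - g(x,u(x)) = \bigl(g_k(x,u_k(x)) - g_k(x,u(x))\bigr) + \bigl(g_k(x,u(x)) - g(x,u(x))\bigr),
\]
keeping $g_k$ in the first bracket; it handles the second bracket by a truncation argument on $u$ and the first by a.e.\ convergence plus a generalized dominated convergence theorem. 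Your split is arguably more transparent, since the paper's claim that $g_k(x,u_k(x))-g_k(x,u(x))\to 0$ a.e.\ requires combining uniform convergence of $g_k$ with continuity of $g$ --- essentially the same chain of reasoning your split makes explicit. A further advantage of your version is that you pair against a fixed $v\in\widetilde X(\R^n)$ throughout. The paper reduces to showing $g_k(\cdot,u_k)\to g(\cdot,u)$ strongly in $L^1(\Om)$, which by itself only controls pairings against $v\in L^\infty(\Om)$; since $\widetilde X(\R^n)$ is not contained in $L^\infty(\Om)$ one actually needs convergence in $L^{(2^*_s)'}(\Om)$ (a small adjustment that works with the same exponent bookkeeping, since $(q-1)(2^*_s)'<2^*_s$). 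Your formulation, by carrying $v$ along and invoking H\"older directly, avoids that subtlety entirely. The only thing to tighten is the final ``subsequence of every subsequence'' remark: you should state explicitly that it applies because the target of the convergence --- the fixed numbers $(J^K_g)'(u)v$ --- do not depend on the subsequence chosen, which you indeed flag.
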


\begin{proof}
Due to the convergence  $u_k \rightharpoonup u$ weakly in $\widetilde{X}(\R^n)$, we have that
    \begin{equation}\label{eq: theLSCLemmaEq2}
        \lim_{k \rightarrow \infty}\iint_{\R^{2n}}\frac{(u_k(x) - u_k(y))(v(x) - v(y))}{|x - y|^{n + 2s}}dxdy \ = \ \iint_{\R^{2n}}\frac{(u(x) - u(y))(v(x) - v(y))}{|x - y|^{n + 2s}}dxdy
    \end{equation}
    for all $v \in \widetilde{X}(\R^n)$. Then, owing to the formula \eqref{eq: First Frechet}, it suffices to prove that $g_k(\cdot, u_k(\cdot)) \rightarrow g(\cdot, u(\cdot))$ strongly in $L^1(\Om)$ as $k \rightarrow \infty$.
% First, since $[\cdot]_{X(\Om)}$ acts as a semi-norm on $\widetilde{X}(\R^n)$, by weak lower semi-continuity we have
%     \begin{equation}\label{eq: theLSCLemmaEq3}
%         \liminf_{k \rightarrow \infty}\frac{1}{2}[u_k]^2_{X(\Om)} \ \geq \ \frac{1}{2}[u]^2_{X(\Om)}
%     \end{equation}
%     Due to the compact embedding $H^s(\Om) \Subset L^q(\Om)$ and the continuous embedding $X(\Om) \subset H^s(\Om)$ (from Lemma \ref{lem: EquivNorms}), $u_k \rightarrow u$  in $L^q(\Om)$ for $q < 2^*_s$, and $u \in \widetilde{H^s}(\R^n)$. We now claim that
%         \begin{equation}\label{eq: theLSCLemmaEq4}
%         \lim_{k \rightarrow \infty}\left(-\int_{\Om}u_k(x)^2 dx - \int_{\Om}g_k(x, u_k(x))u_k(x)dx\right) \ = \ -\int_{\Om}u(x)^2dx - \int_{\Om}g(x, u(x))u(x)dx.
%     \end{equation}
% Due to the convergence $u_k \rightarrow u$  in $L^q(\Om)$, to prove \eqref{eq: theLSCLemmaEq4} 
By the Triangle Inequality,
    \begin{equation}\label{eq: theLSCLemmaEq5}
        \|g_k(\cdot, u_k(\cdot)) - g(\cdot, u(\cdot))\|_{L^1(\Om)} \ \leq \ \|g_k(\cdot, u_k(\cdot)) - g_k(\cdot, u(\cdot))\|_{L^1(\Om)} + \|g_k(\cdot, u(\cdot)) - g(\cdot, u(\cdot))\|_{L^1(\Om)}.
    \end{equation}
    We claim that $\|g_k(\cdot, u(\cdot)) - g(\cdot, u(\cdot))\|_{L^1(\Om)} \rightarrow 0$ as $k \rightarrow \infty$. To see this, let $\ep > 0$ be fixed and for any $M > 0$ there exists a $N \in \N^+$ so that for all $k \geq N$ we have $|g_k(x, u(x)) - g(x, u(x))| \leq \frac{\ep}{|\Om|}$ for all $x \in \Om$ where $|u(x)| \leq M$. At the same time, due to \eqref{Eq: gqCoercive} and the boundedness of $\Om$ we have the growth bound
    \begin{equation}\label{eq: theLSCLemmaEq5A}
        |g_k(x, u(x)) - g(x, u(x)| \ \lesssim \ |u(x)|^{q - 1}
    \end{equation}
    for all $x \in \Om$, in particular when $|u(x)| > M$.\footnote{The underlying constant only depends on $|\Om|$.} Combining these observations gives us the estimate
    \begin{equation}\label{eq: theLSCLemmaEq5B}
        \|g_k(\cdot, u(\cdot)) - g(\cdot, u(\cdot))\|_{L^1(\Om)} \ \lesssim \ \ep + \int_{\Om \cap \{|u(x)|^{q - 1} > M^{q - 1}\}}|u(x)|^{q - 1}dx
    \end{equation}
    Since $u \in L^{2^*_s}(\Om)$ and $q < 2^*_s$, the integral on the right-hand side of \eqref{eq: theLSCLemmaEq5B} is finite, and moreover, converges to $0$ as $M \rightarrow \infty$ (and as $k \rightarrow \infty$, since $N$ depends on $M$). As a result,
    \begin{equation}\label{eq: theLSCLemmaEq5C}
        \limsup_{k \rightarrow \infty} \|g_k(\cdot, u(\cdot)) - g(\cdot, u(\cdot))\|_{L^1(\Om)} \ \leq \ \ep.
    \end{equation}
    Since $\ep > 0$ was arbitrary, we conclude that $\|g_k(\cdot, u(\cdot)) - g(\cdot, u(\cdot))\|_{L^1(\Om)} \rightarrow 0$ as $k \rightarrow \infty$.
    
Thus to complete the proof, we must show that $\|g_k(\cdot, u_k(\cdot)) - g_k(\cdot, u(\cdot))\|_{L^1(\Om)} \rightarrow 0$ as $k \rightarrow \infty$. Up to a not relabeled sub-sequence, $u_k \rightarrow u$ a.e. in $\Om$, so $g_k(x, u_k(x)) - g_k(x, u(x)) \rightarrow 0$ a.e. $x \in \Om$. To construct a  bound on the integrand $|g_k(x, u_k(x)) - g(x, u(x))|$ uniformly integrable in $k$, we use the growth assumption \eqref{Eq: gqCoercive} to see that
    \begin{equation}\label{eq: theLSCLemmaEq6}
        |g_k(x, u_k(x)) - g(x, u(x))| \ \leq \ a_1 + a_2|u_k(x)|^{q - 1} + a_2|u(x)|^{q - 1},
    \end{equation}
    and since $\Om$ is bounded and $q < 2^*_s$, we have that $a_1 + a_2|u_k(x)|^{q - 1} + a_2|u(x)|^{q - 1}$ is a suitable sequence of $L^1(\Om)$ integrands converging a.e. to $a_1 + 2a_2|u(x)|^{q - 1}$. Furthermore, due to the compact embedding $H^s(\Om) \Subset L^{2^*_s}(\Om)$, we have that $u_k \rightarrow u$ strongly in $L^{2^*_s}(\Om)$. Thus by the Generalized Dominated Convergence Theorem, we obtain $g_k(\cdot, u_k(\cdot)) \rightarrow g(\cdot, u(\cdot))$ strongly in $L^1(\Om)$, completing the proof.

    % Now let $v \in \widetilde{X}(\R^n)$ be arbitrary. By a simpler version of the previous argument where no convergence of states is needed, it is immediate that
    % \begin{equation}\label{eq: theLSCLemmaEq1}
    %     \lim_{k \rightarrow \infty}J^K_{g_k}(v) \ = \ J^K_g(v).
    % \end{equation}
    % Combining \eqref{eq: theLSCLemmaEq1} and \eqref{eq: theLSCLemmaEq2} gives \eqref{eq: EnergiesNLgIneqDes}, as desired. 
\end{proof}

To conclude this section we present the analogue of Lemma \ref{lem: theLSCLemma} for Problem \ref{Pr: lCtrlProb}. The proof is identical.

\begin{lemma}\label{lem: theLSCLemmaLoc}
Suppose $\{g_k\}^{\infty}_{k = 1} \subset Z^{\loc}_{\text{ad}}$ is a sequence converging to $g$ uniformly in $\Om \times (-M, M)$ for all $M > 0$. Further suppose that $\{u_k\}^{\infty}_{k = 1} \subset \widetilde{H^1}(\R^n)$ is a sequence such that, for all $k \in \N^+$ and $v \in \widetilde{H^1}(\R^n)$, we have
    \begin{equation}\label{eq: EnergiesLgIneq}
        J^{\loc}_{g_k}(u_k) \ \leq \ J^{\loc}_{g_k}(v);
    \end{equation}
    that is, $(u_k, g_k) \in \Alg^{\loc}$ for all $k \in \N^+$. Finally assume that $u_k \rightharpoonup u$ weakly in $H^1(\Om)$ to some $u \in \widetilde{H^1}(\R^n)$. Then $(u, g) \in \Alg^{\loc}$, that is for all $v \in \widetilde{H^1}(\R^n)$ we have
    \begin{equation}\label{eq: EnergiesLgIneqDes}
        J^{\loc}_g(u) \ \leq \ J^{\loc}_g(v).
    \end{equation}
\end{lemma}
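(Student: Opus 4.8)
The plan is to transcribe the proof of Lemma~\ref{lem: theLSCLemma} essentially verbatim, making the substitutions $\widetilde{X}(\R^n)\rightsquigarrow\widetilde{H^1}(\R^n)$, the nonlocal Dirichlet form $\iint_{\R^{2n}}K(x-y)(\cdot)(\cdot)\,dxdy\rightsquigarrow\int_\Om\grad(\cdot)\cdot\grad(\cdot)\,dx$, the critical exponent $2^*_s\rightsquigarrow 2^*_1$, and the compact embedding $H^s(\Om)\Subset L^q(\Om)$ (for $q<2^*_s$) $\rightsquigarrow$ the Rellich--Kondrachov embedding $H^1(\Om)\Subset L^q(\Om)$ (for $q<2^*_1$). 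Concretely, I would first use the weak convergence $u_k\rightharpoonup u$ in $\widetilde{H^1}(\R^n)$ together with the fact that, for each fixed $w\in\widetilde{H^1}(\R^n)$, the map $v\mapsto\int_\Om\grad v\cdot\grad w\,dx$ is a bounded linear functional on $\widetilde{H^1}(\R^n)$, to pass to the limit in the gradient term of the weak formulation \eqref{eq: LWk} characterising membership in $\Alg^{\loc}$. By this it then suffices to prove that $g_k(\cdot,u_k(\cdot))\to g(\cdot,u(\cdot))$ strongly in $L^1(\Om)$.

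For that convergence I would split, exactly as in \eqref{eq: theLSCLemmaEq5},
\[
\|g_k(\cdot,u_k(\cdot))-g(\cdot,u(\cdot))\|_{L^1(\Om)}\ \le\ \|g_k(\cdot,u_k(\cdot))-g_k(\cdot,u(\cdot))\|_{L^1(\Om)}+\|g_k(\cdot,u(\cdot))-g(\cdot,u(\cdot))\|_{L^1(\Om)}.
\]
For the second summand, fix $\ep>0$; for any $M>0$, uniform convergence of $g_k\to g$ on $\Om\times(-M,M)$ handles the region where $|u|\le M$, while on $\{|u|>M\}$ the growth bound \ref{A2} gives $|g_k(x,u(x))-g(x,u(x))|\lesssim|u(x)|^{q-1}$; since $u\in H^1(\Om)\hookrightarrow L^{2^*_1}(\Om)$ and $q-1<2^*_1$, the tail integral $\int_{\Om\cap\{|u|>M\}}|u|^{q-1}\,dx$ tends to $0$ as $M\to\infty$, so $\limsup_{k}$ of this summand is at most $\ep$, and $\ep$ is arbitrary. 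For the first summand, pass to a not-relabelled subsequence along which Rellich--Kondrachov gives $u_k\to u$ in $L^{q-1}(\Om)$ and, along a further subsequence, $u_k\to u$ a.e.\ in $\Om$; combining this with the uniform convergence $g_k\to g$ and the continuity of the limit $g$ (which holds by \ref{P4}) yields $g_k(x,u_k(x))-g_k(x,u(x))\to 0$ a.e.\ in $\Om$, while \ref{A2} gives the domination $|g_k(x,u_k(x))-g_k(x,u(x))|\le a_1+a_2|u_k(x)|^{q-1}+a_2|u(x)|^{q-1}$, whose right-hand side converges in $L^1(\Om)$ to $a_1+2a_2|u(x)|^{q-1}$. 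The Generalized Dominated Convergence Theorem then forces the first summand to $0$ along the subsequence; since the original sequence was arbitrary, the subsequence-of-every-subsequence principle upgrades this to convergence of the full sequence, and combining with the gradient limit gives $(J^{\loc}_g)'(u)(v)=0$ for all $v\in\widetilde{H^1}(\R^n)$.

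The only place requiring genuine care---and hence the main obstacle---is the same one present in Lemma~\ref{lem: theLSCLemma}: the control index and the state index are coupled through the single parameter $k$, so neither plain dominated convergence nor plain uniform convergence closes the argument on its own. One must use the triangle-inequality splitting, exploit the \emph{strict} subcriticality $q<2^*_1$ to kill the far-field tails of $|u|^{q-1}$, and invoke a variable-dominating-function (generalized) dominated convergence step whose a.e.\ hypothesis itself depends on marrying the uniform convergence $g_k\to g$ on compacta with the a.e.\ convergence $u_k\to u$ and the continuity of $g$. Everything else is a routine reprise of the nonlocal argument with the Laplacian Dirichlet form replacing the kernel form.
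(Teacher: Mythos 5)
Your proposal is correct and follows exactly the route the paper intends: the paper's own proof of this lemma is literally ``The proof is identical'' (pointing to Lemma~\ref{lem: theLSCLemma}), and you carry out precisely that transcription, replacing $\widetilde{X}(\R^n)$ by $\widetilde{H^1}(\R^n)$, the kernel form by the Dirichlet form, $2^*_s$ by $2^*_1$, and the fractional compact embedding by Rellich--Kondrachov. You even supply a couple of details the nonlocal argument leaves implicit---the justification of the a.e.\ convergence $g_k(x, u_k(x)) - g_k(x, u(x)) \to 0$ by marrying a.e.\ convergence of $u_k$ with uniform convergence of $g_k$ and continuity of $g$, and the use of the subcritical exponent $q-1$ (rather than the critical one) in the compact embedding step---but these are refinements of, not departures from, the paper's argument.
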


%%%%%%%%%%

\section{Existence of Solutions to Control Problems}\label{Sec: Existence}
\subsection{Main Existence Results}\label{Subsec: Existence}

The following is the main result of this paper, existence of solutions for our fractional control problem.

\begin{theorem}[Existence of minimizers for nonlocal control problem]\label{Thm: ExMinNonlocalProb}
    Let $K: \R^n\setminus\{0\} \rightarrow \R$ be a kernel satisfying Assumption \ref{Assump: ker}. Then there exists a solution $(\overline{u_s}, \overline{g_s}) \in \Alg$ to Problem \ref{Pr: NlCtrlProb}. 
\end{theorem}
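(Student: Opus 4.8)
The plan is to follow the standard \emph{direct method in the calculus of variations}, adapted so that the constraint set $\Alg$ plays the role of the admissible set. First I would check that $\Alg$ is nonempty: by Proposition \ref{Prop: exsNLState}, every $g \in Z_{\text{ad}}$ produces at least one nontrivial solution $u \in \widetilde{X}(\R^n)$, and $Z_{\text{ad}}$ itself is nonempty (the prototype $g(x,t) = a(x)|t|^{q-2}t$ with $a \in W^{1,r}(\Om)$, as noted after Lemma \ref{lem: closedAndConvex}). Next I would observe that the cost $I_s(u,g) = [u - u_{\text{des}}]^2_{X(\Om)} + \la\int_\Om |g(x,S(x))|\,dx \geq 0$, so the infimum over $\Alg$ is finite and nonnegative; call it $m$. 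Take a minimizing sequence $(u_k, g_k) \in \Alg$ with $I_s(u_k, g_k) \to m$.

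The second step is to extract convergent subsequences. From $[u_k - u_{\text{des}}]^2_{X(\Om)} \leq I_s(u_k,g_k) \leq m + 1$ for large $k$, the seminorms $[u_k]_{X(\Om)}$ are bounded; combined with the Poincaré-type control of $\|u_k\|_{L^2(\Om)}$ on $\widetilde{X}(\R^n)$ (implicit in the Hilbert space structure and the embedding \ref{P5} into $H^s$), the sequence $\{u_k\}$ is bounded in $\widetilde{X}(\R^n)$, hence has a weakly convergent subsequence $u_k \rightharpoonup u$ in $\widetilde{X}(\R^n)$, with $u \in \widetilde{X}(\R^n)$ since the space is weakly closed. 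For the controls: the penalty term does not directly bound $g_k$ in $W^{1,r}$, so instead I would use Lemma \ref{lem: closedAndConvex} — $Z_{\text{ad}}$ is a convex, closed (hence weakly closed) subset of $W^{1,r}(\Om \times (-M,M))$ for each $M$. If I can show $\{g_k\}$ is bounded in $W^{1,r}(\Om \times (-M,M))$ for each fixed $M$ (this is exactly Assumption \ref{A1}: $\|g_k\|_{W^{1,r}(\Om\times(-M,M))} \leq C(M)$ uniformly in $k$!), then by reflexivity and a diagonal argument over $M \to \infty$ I extract a subsequence with $g_k \rightharpoonup g$ weakly in each $W^{1,r}(\Om\times(-M,M))$, and by \ref{P4} (compact embedding into $C(\Om\times(-M,M))$) the convergence is uniform on $\Om\times(-M,M)$ for every $M$. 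Lemma \ref{lem: closedAndConvex} then gives $g \in Z_{\text{ad}}$.

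The third step is admissibility of the limit pair: the hypotheses of Lemma \ref{lem: theLSCLemma} are now exactly met — $g_k \to g$ uniformly on $\Om\times(-M,M)$ for all $M$, $(u_k,g_k)\in\Alg$, and $u_k\rightharpoonup u$ in $\widetilde{X}(\R^n)$ — so the lemma yields $(u,g) \in \Alg$. The final step is lower semicontinuity of the cost. The term $[u-u_{\text{des}}]^2_{X(\Om)}$ is (the square of) a norm-type quantity that is weakly lower semicontinuous under $u_k \rightharpoonup u$, giving $[u-u_{\text{des}}]^2_{X(\Om)} \leq \liminf_k [u_k - u_{\text{des}}]^2_{X(\Om)}$; and for the penalty, since $g_k \to g$ uniformly on $\Om\times(-M,M)$ with $M$ chosen so $|S(x)| \leq M$ a.e. (as $S \in L^{q-1}(\Om)$, one truncates or argues on sublevel sets), $g_k(\cdot,S(\cdot)) \to g(\cdot,S(\cdot))$ in $L^1(\Om)$, so $\int_\Om |g(x,S(x))|\,dx = \lim_k \int_\Om |g_k(x,S(x))|\,dx$. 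Hence $I_s(u,g) \leq \liminf_k I_s(u_k,g_k) = m$, and since $(u,g)\in\Alg$ we get $I_s(u,g) = m$, i.e. $(\overline{u_s},\overline{g_s}) := (u,g)$ solves Problem \ref{Pr: NlCtrlProb}.

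I expect the main obstacle to be the subtlety hidden in handling $S \in L^{q-1}(\Om)$ in the penalty term: $S$ need not be bounded, so "choose $M$ with $|S| \leq M$" is not literally available, and one must split $\Om$ into $\{|S| \leq M\}$ and $\{|S| > M\}$, control the first part by uniform convergence and the second by the growth bound \ref{A2} together with $S \in L^{q-1}(\Om)$ (so $|g_k(x,S(x))| \lesssim a_1 + a_2|S(x)|^{q-1}$ is uniformly integrable), then let $M \to \infty$ — mirroring the argument in Lemma \ref{lem: theLSCLemma}. A secondary point requiring care is justifying that $\|u_k\|_{L^2(\Om)}$ is bounded from $[u_k]_{X(\Om)}$ being bounded; this needs the fractional Poincaré inequality on $\widetilde{X}(\R^n)$, which follows from \ref{K2} and \ref{P5} reducing to the known Poincaré inequality on $\widetilde{H^s}(\R^n)$. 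Everything else is routine application of the lemmas already established.
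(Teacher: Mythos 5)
Your proof is correct and follows essentially the same route as the paper: extract a weakly convergent subsequence of states from coercivity of $[\cdot]^2_{X(\Om)}$ on $\widetilde{X}(\R^n)$, extract a uniformly convergent subsequence of controls via the uniform $W^{1,r}$ bound in \ref{A1} together with the compact embedding \ref{P4}, pass to the limit in the constraint using Lemma \ref{lem: theLSCLemma}, and close with weak lower semicontinuity of the cost. You are in fact slightly more explicit than the paper on two small but worthwhile points — invoking the fractional Poincar\'e inequality to upgrade the seminorm bound to a full $\widetilde{X}(\R^n)$ bound, and splitting on sublevel sets of $S$ to handle the penalty term since $S \in L^{q-1}(\Om)$ need not be bounded — both of which the paper leaves implicit (it just asserts boundedness of the states and refers back to the argument surrounding \eqref{eq: theLSCLemmaEq5C} for the penalty).
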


\begin{proof}
    Due to non-negativity, the cost functional \eqref{eq: CostFunc} is bounded from below over pairs in $\Alg$. Let $m := \inf_{(u, g) \in \Alg}I(u, g)$, and pick a minimizing sequence $\{(u_j, g_j)\}^{\infty}_{j = 1} \subset \Alg$ so that $\lim_{j \rightarrow \infty}I(u_j, g_j) = m$. Owing to the structure of the cost functional \eqref{eq: CostFunc}, the sequence $\{u_j\}^{\infty}_{j = 1} \subset \widetilde{X}(\R^n)$ is a bounded sequence in $\widetilde{X}(\R^n)$, so by reflexivity there exists a non-relabeled sub-sequence and a function $\overline{u_s} \in \widetilde{X}(\R^n)$ such that $u_j \rightharpoonup \overline{u_s}$ weakly in $\widetilde{X}(\R^n)$, and so we have
    \begin{equation}\label{Eq: ExistEq0}
        \liminf_{j \rightarrow \infty}[u_j - u_{\des}]^2_{X(\Om)} \ \geq \ [\overline{u_s} - u_{\des}]^2_{X(\Om)}.
    \end{equation}
    
    % Owing to the compact embedding $\widetilde{X}(\R^n) \Subset L^{2^*_s}(\Om)$ we have $u_j \rightarrow \overline{u_s}$ strongly in $L^{2^*_s}(\Om)$, and since $u_j \in \widetilde{X}(\R^n)$ for all $j$, we have (perhaps up to a further not relabeled sub-sequence where a.e. convergence holds) that $\overline{u_s} \in \widetilde{X}(\R^n)$ as well. 
    
    Next, due to $\{g_j\}^{\infty}_{j = 1} \subset Z_{\text{ad}}$ being a bounded sequence in $W^{1, r}(\Om \times (-M, M))$ for every $M > 0$ and Theorem \ref{P4}, there exists a further, not relabeled sub-sequence and a $\overline{g_s} \in Z_{\text{ad}}$ so that $g_j \rightarrow \overline{g_s}$ uniformly in $\Om \times (-M, M)$, for any $M \in \R$. Repeating the argument surrounding \eqref{eq: theLSCLemmaEq5C}, and using that $S \in L^q(\Om)$, we have that
    \begin{equation}\label{Eq: ExistEq0A}
        \lim_{j \rightarrow \infty}\int_{\Om}|g_j(x, S(x))|dx \ = \ \int_{\Om}|g(x, S(x))|dx.
    \end{equation}
    Combining \eqref{Eq: ExistEq0} and \eqref{Eq: ExistEq0A} yields
    \begin{equation}\label{Eq: ExistEq1}
        m \ = \ \lim_{j \rightarrow \infty}I_s(u_j, g_j) \ \geq \ I_s(\overline{u_s}, \overline{g_s}),
    \end{equation}

    Finally, due to the closedness of $Z_{\text{ad}}$ (Lemma \ref{lem: closedAndConvex}), $\overline{g_s} \in Z_{\text{ad}}$ as well. Then due to Lemma \ref{lem: theLSCLemma} we have that $(\overline{u_s}, \overline{g_s}) \in \Alg$, completing the proof.
\end{proof}

\begin{theorem}[Existence of minimizers for local control problem]\label{Thm: ExMinLocalProb}
    There exists a solution $(\overline{u}, \overline{g}) \in \Alg^{\loc}$ to Problem \ref{Pr: lCtrlProb}. 
\end{theorem}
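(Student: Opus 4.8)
The plan is to transcribe the proof of Theorem~\ref{Thm: ExMinNonlocalProb} into the local setting, since every ingredient used there has a local counterpart already established above. First I would record that $\Alg^{\loc}$ is nonempty: by Proposition~\ref{Prop: exsLState} every $g \in Z^{\loc}_{\text{ad}}$ produces at least one nontrivial solution $u \in \widetilde{H^1}(\R^n)$ of \eqref{eq: LWkStateEqn}, so at least one admissible pair exists. Since the local cost functional \eqref{eq: costFuncLoc} is a sum of non-negative terms, $m := \inf_{(u,g) \in \Alg^{\loc}} I_{\loc}(u,g)$ is a finite non-negative number; I would then fix a minimizing sequence $\{(u_j, g_j)\}_{j=1}^{\infty} \subset \Alg^{\loc}$ with $I_{\loc}(u_j, g_j) \to m$.

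Next I would extract the relevant convergent subsequences. The boundedness of $[u_j - u_{\des}]_{H^1(\Om)}$ along the minimizing sequence forces $[u_j]_{H^1(\Om)}$ to be bounded; since each $u_j$ vanishes outside the bounded set $\Om$, the Poincaré inequality makes the seminorm $[\cdot]_{H^1(\Om)}$ equivalent to $\|\cdot\|_{H^1(\Om)}$ on $\widetilde{H^1}(\R^n)$, so $\{u_j\}$ is bounded in that Hilbert space and a non-relabeled subsequence satisfies $u_j \rightharpoonup \overline{u}$ weakly, with $\overline{u} \in \widetilde{H^1}(\R^n)$. Weak lower semicontinuity of $v \mapsto [v - u_{\des}]^2_{H^1(\Om)}$ then gives $\liminf_{j} [u_j - u_{\des}]^2_{H^1(\Om)} \geq [\overline{u} - u_{\des}]^2_{H^1(\Om)}$. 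In parallel, $\{g_j\} \subset Z^{\loc}_{\text{ad}}$ is bounded in $W^{1,r}(\Om \times (-M,M))$ for every $M > 0$, so by the compact embedding \ref{P4} a further, not relabeled, subsequence converges uniformly on each $\Om \times (-M,M)$ to some $\overline{g}$; rerunning the argument culminating in \eqref{eq: theLSCLemmaEq5C}, now with the restriction $q < 2^*_1$ and with $S$ as in \eqref{eq: costFuncLoc}, yields $\int_{\Om} |g_j(x, S(x))|\,dx \to \int_{\Om} |\overline{g}(x, S(x))|\,dx$.

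Combining these two facts gives $m = \lim_j I_{\loc}(u_j, g_j) \geq I_{\loc}(\overline{u}, \overline{g})$, so the proof reduces to checking $(\overline{u}, \overline{g}) \in \Alg^{\loc}$. Closedness of $Z^{\loc}_{\text{ad}}$ (Lemma~\ref{lem: closedAndConvexL}) gives $\overline{g} \in Z^{\loc}_{\text{ad}}$, and Lemma~\ref{lem: theLSCLemmaLoc}, applied to the uniformly convergent $\{g_j\}$ and the weakly convergent $\{u_j\}$, shows that $\overline{u}$ still satisfies the weak form \eqref{eq: LWk}, i.e. $(\overline{u}, \overline{g}) \in \Alg^{\loc}$; hence $I_{\loc}(\overline{u}, \overline{g}) = m$ and the pair is a minimizer of Problem~\ref{Pr: lCtrlProb}. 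I do not anticipate a genuine obstacle — the argument is line-by-line the same as the nonlocal one — but the two points that deserve an explicit word rather than a bare citation are the Poincaré-type norm equivalence on $\widetilde{H^1}(\R^n)$ used to get weak compactness of $\{u_j\}$, and the bookkeeping that every Sobolev embedding invoked in the reused estimates (notably the one behind \eqref{eq: theLSCLemmaEq5C} and inside Lemma~\ref{lem: theLSCLemmaLoc}) now carries the exponent $2^*_1 = \frac{2n}{n-2}$ in place of $2^*_s$.
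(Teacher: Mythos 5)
Your proposal is correct and coincides with the paper's intended argument: the paper's proof of Theorem~\ref{Thm: ExMinLocalProb} simply states that the technique is the same as for Theorem~\ref{Thm: ExMinNonlocalProb}, substituting Lemma~\ref{lem: theLSCLemmaLoc} for Lemma~\ref{lem: theLSCLemma}, and you have carried out exactly that transcription, with the appropriate replacements of $X(\Om)$ by $H^1(\Om)$ and $2^*_s$ by $2^*_1$.
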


 The proof technique is the same as for the nonlocal problem, but using Lemma \ref{lem: theLSCLemmaLoc} instead of Lemma \ref{lem: theLSCLemma}. Also, just as in the nonlocal setting, solutions to Problem \ref{Pr: lCtrlProb} need not be unique, because $\Alg^{\loc}$ is not convex.

%%%%%%%%%%%

% While this theoretical framework appears to be limited, one additional application is to consider minimizing the compliance-type cost functional of the form
% \begin{equation}\label{Eq: workCost}
%     W(u, g) \ := \ \int_{\Om}g(x, u(x))u(x)dx
% \end{equation}

%%%%%%%%%%%%%

%\subsection{Local Problem}\label{Subsec: ExistenceL}
\subsection{Discussion}\label{Subsec: ExistDiscuss}

In this subsection we discuss extensions and limitations of the theory used to prove Theorems \ref{Thm: ExMinNonlocalProb} and \ref{Thm: ExMinLocalProb}. First, since the case where $K(x) = |x|^{-(n + 2s)}$ is of special interest, we state Theorem \ref{Thm: ExMinNonlocalProb} explicitly in this special case.

\begin{corollary}[Fractional kernels]\label{Cor: FracKernelsMPOC}
There exists a minimizer $(\overline{u_s}, \overline{g_s})$ to the cost functional \eqref{eq: CostFunc} (replacing $X(\Om)$ with $H^s(\Om)$), where the minimizer is taken over all pairs $(u, g) \in \widetilde{H^s}(\R^n) \times Z_{\text{ad}}$ that solve the equation
\begin{equation}\label{Eq: FracLaplace}
\begin{cases}
    (-\lap)^s u(x) \ = \ g(x, u(x)), \ x \in \Om \\
    \ \ \ \ \ \ \ \ \ \ \ u(x) \ = \ 0 \ \ \ \  \ \ \ \ \ \ \ \ \ \ \ \ \ \ x \in \R^n \setminus \Om
    \end{cases}
\end{equation}
in the weak sense.
\end{corollary}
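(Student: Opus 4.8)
The plan is to specialize Theorem~\ref{Thm: ExMinNonlocalProb} to the kernel $K(x) = |x|^{-(n+2s)}$ and verify that the hypotheses and conclusions translate into the stated form. First I would check that this particular $K$ satisfies Assumption~\ref{Assump: ker}: condition \ref{K1} holds because $\ga(x)K(x) = \min\{|x|^2,1\}\,|x|^{-(n+2s)}$ is integrable near the origin (it behaves like $|x|^{2-n-2s}$, and $2-n-2s > -n$ since $s<1$) and near infinity (where it behaves like $|x|^{-n-2s}$); condition \ref{K2} holds trivially with $\la = 1$; and condition \ref{K3} holds since $x \mapsto |x|^{-(n+2s)}$ is even. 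Hence Theorem~\ref{Thm: ExMinNonlocalProb} applies and yields a minimizing pair $(\overline{u_s}, \overline{g_s}) \in \Alg$.

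Next I would identify the abstract objects with their concrete counterparts for this kernel. When $K(x) = |x|^{-(n+2s)}$, the operator $\Lin_K$ defined by \eqref{Eq: NLOperatorLK} is, by the standard computation recalled in \cite{servadei2012mountain, di2012hitchhikers}, equal to $-(-\lap)^s$ (up to the usual normalizing constant, which we absorb), so the state equation \eqref{eq: NLWkStateEqn} becomes \eqref{Eq: FracLaplace}. Moreover, with this kernel the seminorm $[\cdot]_{X(\Om)}$ coincides with the Gagliardo seminorm appearing in \eqref{Eq: HSNorm}, so $\widetilde{X}(\R^n) = \widetilde{H^s}(\R^n)$ with equality of norms, and the cost functional \eqref{eq: CostFunc} is exactly the one obtained by replacing $X(\Om)$ with $H^s(\Om)$ throughout. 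The weak formulation \eqref{eq: NLWk} then reads precisely as the weak form of \eqref{Eq: FracLaplace}, and the admissible class $\Alg$ of Problem~\ref{Pr: NlCtrlProb} coincides with the set of pairs $(u,g) \in \widetilde{H^s}(\R^n) \times Z_{\text{ad}}$ solving \eqref{Eq: FracLaplace} weakly.

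Putting these identifications together, the minimizer $(\overline{u_s}, \overline{g_s})$ produced by Theorem~\ref{Thm: ExMinNonlocalProb} is exactly a minimizer of the functional in the statement over the indicated constraint set, which proves the corollary. I do not anticipate a serious obstacle here: the only mildly technical point is making the identification $\Lin_K = -(-\lap)^s$ and $\widetilde{X}(\R^n) = \widetilde{H^s}(\R^n)$ precise, but this is entirely standard and is already implicit in the references \cite{servadei2012mountain, servadei2013variational} that the paper relies on, so it can be dispatched with a citation rather than a computation.
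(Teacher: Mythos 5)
Your proposal is correct and follows precisely the route the paper intends: the corollary is a direct specialization of Theorem~\ref{Thm: ExMinNonlocalProb} to $K(x)=|x|^{-(n+2s)}$, and you have carefully verified \ref{K1}--\ref{K3} and made the identifications $\Lin_K = -(-\lap)^s$, $\widetilde{X}(\R^n)=\widetilde{H^s}(\R^n)$ explicit. The only pedantic caveat is that the seminorm $[\cdot]_{X(\Om)}$ integrates over $Q$ rather than over $\Om\times\Om$ as in \eqref{Eq: HSNorm}, so the two agree (for functions vanishing outside $\Om$) with the full-space Gagliardo seminorm rather than the one literally displayed in \eqref{Eq: HSNorm}; this matches the paper's own informal phrasing and is not a gap.
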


Next, we discuss an alternative way to interpret our cost functional.

\begin{remark}[Compliance]\label{Rmk: compliance}
    We may re-interpret Problem \ref{Pr: NlCtrlProb} as one where the cost we minimize is of a compliance type. The reason is that if $u_{\des} := 0$ in \eqref{eq: CostFunc}, then for any pair $(u, g) \in \Alg$, the cost functional can be rewritten as
    \begin{equation}\label{Eq: complianceFormCost}
        I_s(u, g) \ := \ \int_{\Om}g(x, u(x))u(x)dx + \la \int_{\Om}|g(x, S(x))|dx.
    \end{equation}
    However, if we do this we must assume that $\la > 0$. Otherwise, the minimum value of the cost functional will always be $0$, owing to the original form of the cost functional \eqref{eq: CostFunc} and the fact that for any $g \in Z_{\text{ad}}$, $(g, 0) \in \Alg$. Also notice that condition \ref{A4} precludes $0$ from being an admissible control. 

    The same remark holds for Problem \ref{Pr: lCtrlProb}.
\end{remark}

Finally, we provide a series of other remarks intended to highlight the limitations of our theory.

\begin{remark}\label{Rmk: Perturbations}
We may introduce a lower-order term to the energy \eqref{eq: NLEnergy} of the form $-\frac{\eta}{2}\int_{\Om}u(x)^2dx$ for some $\eta > 0$, and there still exist nontrivial minimizers to this perturbed energy; see \cite{servadei2013variational}. The results in this paper can be repeated in that setting since our conditions on the nonlinearity coincide with those utilized here.
\end{remark}

\begin{remark}\label{Rmk: noQuadII}
    If we use a cost functional of the form
    \begin{equation}\label{EqRmk: penaltyL2}
        I(u, g) \ := \ \frac{1}{2}\|u - u_{\des}\|^2_{L^2(\Om)} + \int_{\Om}|g(x, S(x))|dx,
    \end{equation}
    then we cannot guarantee that the minimizing sequence of states $\{u_j\}^{\infty}_{j = 1}$ generated in the proof of Theorem \ref{Thm: ExMinNonlocalProb} is bounded in the $X(\Om)$ norm. Unlike when the state equation is linear, we cannot get such a boundedness result a priori from Equation \ref{eq: NLWkStateEqn} because the degree of the nonlinearity on the right-hand side is to too high. Indeed, if $(u, g) \in \Alg$ is a nontrivial solution to \eqref{eq: NLWk} and we set $v := u$, then by \eqref{Eq: gqCoercive}, the Nonlocal Poincaré Inequality \cite[Theorem 6.7]{di2012hitchhikers}, and \ref{P5}, we obtain
    \begin{equation}
        [u]^2_{X(\Om)} \ \lesssim \ [u]^{2^*_s}_{X(\Om)},
    \end{equation}
    which is insufficient to get boundedness of states in $X(\Om)$.
\end{remark}

\section*{Acknowledgments}

JMS did not receive any external funding that was used to complete this project. Also, there was no data collected in the completion of this project.

\section*{Funding and/or Conflicts of interests/Competing interest}

JMS reports that there are no conflicts of interest or competing interests for this publication.

% \bibliographystyle{plain} % We choose the "plain" reference style, can change later
% \bibliography{refs} % Entries are in the refs.bib file

\begin{thebibliography}{}

\bibitem{ambrosetti1973dual}
\newblock Ambrosetti, A. and Rabinowitz, P.H., 1973. Dual variational methods in critical point theory and applications. Journal of functional Analysis, 14(4), pp.349-381.

\bibitem{ambrosetti1992critical}
\newblock Ambrosetti, A., 1992. \textit{Critical points and nonlinear variational problems} (Vol. 49). Société mathématique de France.

\bibitem{birkhoff1917dynamical}
\newblock Birkhoff, G.D., 1917. Dynamical systems with two degrees of freedom. \textit{Proceedings of the National Academy of Sciences, 3}(4), pp.314-316.

\bibitem{bisci2014mountain}
\newblock Bisci, G.M. and Radulescu, V.D., 2014. Mountain pass solutions for nonlocal equations. \textit{Annales Fennici Mathematici, 39}(2), pp.579-592.

\bibitem{bisci2014three}
\newblock Bisci, G.M. and Pansera, B.A., 2014. Three weak solutions for nonlocal fractional equations. \textit{Advanced Nonlinear Studies, 14}(3), pp.619-629.

\bibitem{bisci2016variational}
\newblock Bisci, G.M., Rădulescu, V.D. and Servadei, R., 2016. \textit{Variational methods for nonlocal fractional problems} (Vol. 162). Cambridge University Press.

\bibitem{bors2014application}
\newblock Bors, D., 2017. Application of Mountain Pass Theorem to superlinear equations with fractional Laplacian controlled by distributed parameters and boundary data. \textit{Discrete and Continuous Dynamical Systems-B, 23}(1), pp.29-43.

\bibitem{Bre}
\newblock Brezis, H. and Brézis, H., 2011. \textit{Functional analysis, Sobolev spaces and partial differential equations} (Vol. 2, No. 3, p. 5). New York: Springer.

\bibitem{chammem2023combined}
\newblock Chammem, R., Ghanmi, A. and Mechergui, M., 2023. Combined effects in nonlinear elliptic equations involving fractional operators. \textit{Journal of Pseudo-Differential Operators and Applications, 14}(3), p.35.

\bibitem{choi1993mountain}
\newblock Choi, Y.S., McKenna, P.J. and Romano, M., 1993. A mountain pass method for the numerical solution of semilinear wave equations. \textit{Numerische Mathematik, 64}(1), pp.487-509.

\bibitem{choi1993semiwave}
\newblock Choi, Y.S. and McKenna, P.J., 1993. A mountain pass method for the numerical solution of semilinear elliptic problems. \textit{Nonlinear Analysis: Theory, Methods \& Applications, 20}(4), pp.417-437.

\bibitem{clark1972variant}
\newblock Clark, D.C. and Gilbarg, D., 1972. A variant of the Lusternik-Schnirelman theory. Indiana University Mathematics Journal, 22(1), pp.65-74.

\bibitem{d2019priori}
\newblock D'Elia, M., Glusa, C. and Otárola, E., 2019. A priori error estimates for the optimal control of the integral fractional Laplacian. \textit{SIAM Journal on Control and Optimization, 57}(4), pp.2775-2798.

\bibitem{de2009multiple}
\newblock De Nápoli, P.L., Bonder, J.F. and Silva, A., 2009. Multiple solutions for the p-Laplace operator with critical growth. \textit{Nonlinear Analysis: Theory, Methods \& Applications, 71}(12), pp.6283-6289.

\bibitem{degiovanni2022variational}
\newblock Degiovanni, M., 2022. Variational Methods with Application to One-Dimensional Boundary Value Problems and Numerical Evaluations. In \textit{Applied Mathematical Problems in Geophysics: Cetraro, Italy 2019} (pp. 85-123). Cham: Springer International Publishing.

\bibitem{demengel2012functional}
\newblock Demengel, F., 2012. \textit{Functional spaces for the theory of elliptic partial differential equations}. Springer.

\bibitem{di2012hitchhikers}
\newblock Di Nezza, E., Palatucci, G. and Valdinoci, E., 2012. Hitchhiker's guide to the fractional Sobolev spaces. \textit{Bulletin des sciences mathématiques, 136}(5), pp.521-573.

\bibitem{dipierro2017fractional}
\newblock Dipierro, S., Medina, M. and Valdinoci, E., 2017. Fractional elliptic problems with critical growth in the whole of Rn. \textit{Scuola Normale Superiore}.

\bibitem{Ev}
\newblock Evans, L.C., 2022. \textit{Partial differential equations} (Vol. 19). American Mathematical Society.

\bibitem{gu2018infinitely}
\newblock Gu, G., Zhang, W. and Zhao, F., 2018. Infinitely many positive solutions for a nonlocal problem. \textit{Applied Mathematics Letters, 84}, pp.49-55.

\bibitem{han2024compactness}
\newblock Han, Z., Mengesha, T. and Tian, X., 2024. Compactness results for a Dirichlet energy of nonlocal gradient with applications. \textit{arXiv preprint arXiv:2404.09079}.

\bibitem{kajikiya2005critical}
\newblock Kajikiya, R., 2005. A critical point theorem related to the symmetric mountain pass lemma and its applications to elliptic equations. Journal of Functional Analysis, 225(2), pp.352-370.

\bibitem{leoni2023first}
\newblock Leoni, G., 2023. \textit{A first course in fractional Sobolev spaces} (Vol. 229). American Mathematical Society.

\bibitem{ljusternik1934methodes}
\newblock Ljusternik, L. and Schnirelmann, L., 1934.\textit{Méthodes topologiques dans les problèmes variationnels: Espaces à un nombre fini de dimensions. 1ère partie} (Vol. 188). Hermann.

\bibitem{maione2023variational}
\newblock Maione, A., Mugnai, D. and Vecchi, E., 2023. Variational methods for nonpositive mixed local–nonlocal operators. \textit{Fractional Calculus and Applied Analysis, 26}(3), pp.943-961.

\bibitem{mengesha2023control}
\newblock Mengesha, T., Salgado, A.J. and Siktar, J.M., 2023. On the Optimal Control of a Linear Peridynamics Model. \textit{Applied Mathematics \& Optimization, 88}(3), p.70.

\bibitem{milnor1963morse}
\newblock Milnor, J.W., 1963. \textit{Morse theory} (No. 51). Princeton university press.

\bibitem{munoz2022local}
\newblock Muñoz, J., 2022. Local and nonlocal optimal control in the source. \textit{Mediterranean Journal of Mathematics, 19}(1), p.27.

\bibitem{rabinowitz1986minimax}
\newblock Rabinowitz, P.H. ed., 1986. \textit{Minimax methods in critical point theory with applications to differential equations} (No. 65). American Mathematical Soc..

\bibitem{ricceri2000existence}
\newblock Ricceri, B., 2000. Existence of three solutions for a class of elliptic eigenvalue problems. \textit{Mathematical and Computer Modelling, 32}(11-13), pp.1485-1494.

\bibitem{ricceri2000three}
\newblock Ricceri, B., 2000. On a three critical points theorem. \textit{Archiv der Mathematik, 75}(3).

\bibitem{ricceri2010multiplicity}
\newblock Ricceri, B., 2010. A multiplicity result for nonlocal problems involving nonlinearities with bounded primitive. \textit{arXiv preprint arXiv:1007.1105}.

\bibitem{ricceri2011further}
\newblock Ricceri, B., 2011. A further refinement of a three critical points theorem. \textit{Nonlinear Analysis: Theory, Methods \& Applications, 74}(18), pp.7446-7454.

\bibitem{schwartz1969nonlinear}
\newblock Schwartz, J.T., 1969. \textit{Nonlinear functional analysis} (Vol. 4). CRC Press.

\bibitem{servadei2012mountain}
\newblock Servadei, R. and Valdinoci, E., 2012. Mountain pass solutions for non-local elliptic operators. \textit{Journal of Mathematical Analysis and Applications, 389}(2), pp.887-898.

\bibitem{servadei2013variational}
\newblock Servadei, R. and Valdinoci, E., 2013. Variational methods for non-local operators of elliptic type. \textit{Discrete Contin. Dyn. Syst, 33}(5), pp.2105-2137.

\bibitem{struwe2000variational}
\newblock Struwe, M. and Struwe, M., 2000. \textit{Variational methods} (Vol. 991). Berlin: Springer-verlag.

\bibitem{tavares2024topics}
\newblock Tavares, H., 2024. Topics in elliptic problems: from semilinear equations to shape optimization. \textit{Communications in Mathematics}, 32.

\bibitem{troltzsch2010optimal}
\newblock Tröltzsch, F., 2010. \textit{Optimal control of partial differential equations: theory, methods, and applications} (Vol. 112). American Mathematical Soc..

\end{thebibliography}

\end{document}